\newtheorem{theorem}{Theorem}
\newtheorem{prop}[theorem]{Proposition}
\newtheorem{lemma}[theorem]{Lemma}
\newtheorem{rem}[theorem]{Remark}
\newtheorem{exmp}[theorem]{Example}
\begin{document}

\title[Recovering of the Grassmann graph]{Recovering of the Grassmann graph from the subgraph of non-degenerate subspaces}
\author{Mark Pankov}
\keywords{ Grassmann graph, linear code}
\subjclass[2020]{51E20, 51E22.}
\address{Faculty of Mathematics and Computer Science, 
University of Warmia and Mazury, S{\l}oneczna 54, 10-710 Olsztyn, Poland}
\email{pankov@matman.uwm.edu.pl}

\begin{abstract}
Let ${\mathbb F}$ be a (not necessarily finite) field. 
A subspace of the vector space ${\mathbb F}^n$ is called {\it non-degenerate} if it is not contained in a coordinate hyperplane. 
We show that the Grassmann graph of $k$-dimensional subspaces of ${\mathbb F}^n$, $1<k<n-1$,
can be recovered from the subgraph of non-degenerate subspaces if $|{\mathbb F}|>n-k$.
In the case when ${\mathbb F}={\mathbb F}_q$ is the field of $q$ elements, this subgraph is known as 
the graph of non-degenerate linear $[n,k]_q$ codes.
\end{abstract}

\maketitle

\section{Introduction}
Linear $[n,k]_q$ codes are $k$-dimensional subspaces of the vector space ${\mathbb F}^n_q$, where ${\mathbb F}_q$ is the field of $q$ elements,
see \cite[Section 1.1]{TVN}.
In practice, such a code may be useful only if it is non-degenerate, i.e. it is not contained in a coordinate hyper\-plane,  
since degenerate codes actually are of length less than $n$.
The graph of non-degenerate linear $[n,k]_q$ codes $\Gamma(n,k)_q$ is the subgraph of the Grassmann graph of $k$-dimensional subspaces of ${\mathbb F}^n_q$
induced by the set of non-degenerate subspaces;
two distinct codes are adjacent vertices of $\Gamma(n,k)_q$ if and only if they have the maximal number of common code words.
This graph is first considered in \cite{KP1} and the research is continued in  \cite{KP2,P1,P2}. 
Structural properties of $\Gamma(n,k)_q$ depend significantly on the parameters $n,k,q$ (see, for example, \cite{KP1,P1}).
Also, in contrast to Grassmann graphs, there is no duality: the Grassmann graphs of $k$-dimensional and $(n-k)$-dimensional  subspaces of ${\mathbb F}^n_q$ are isomorphic, 
for the graphs of non-degenerate linear codes this fails. 

Subgraphs of Grassmann graphs induced by various types of linear codes are investigated in 
\cite{CGK,CG, KPP, KP3,KP4,PPZ}.

In the present paper, we show that the Grassmann graph of $k$-dimensional subspaces of ${\mathbb F}^n$, where 
${\mathbb F}$ is a not necessarily finite field and $1<k<n-1$,  can be recovered from the subgraph of non-degenerate subspaces if 
$|{\mathbb F}|>n-k$. Our resoning is in spirit of recovering of projective spaces from affine spaces.
Recall that an affine space can be obtained from a projective space by removing a hyperplane.
To recover the projective space it is sufficient to add to the affine space the classes of parallel lines as points at infinity,
such points form the removed hyperplane. 
To recover the Grassmann graph we determine some special classes of disjoint maximal cliques in the subgraph of non-degenerate subspaces 
and show that the extensions of cliques from such a class to maximal cliques of the Grassmann graph are intersecting in a vertex which is a degenerate subspace.
So, the Grassmann graph can be obtained from the subgraph of non-degenerate subspaces by 
adding these classes of cliques as vertices. 
There are some technical differences for $n=4$ and we consider this case separately.

Examples from Section 7 show that the provided construction is impossible if $|{\mathbb F}|\le n-k$. 
We conjecture that recovering of the Grassmann graph in this case can be obtained in a more complicated way.

\section{Grassmann graph and the subgraph of non-degenerate subspaces}

Let ${\mathbb F}$ be a field (not necessarily finite). Consider the $n$-dimensional vector space $V={\mathbb F}^n$. 
The kernel of the $i$-th coordinate functional $(x_1,\dots,x_n)\to x_i$
will be denoted by $C_i$. A subspace of $V$ is said to be {\it degenerate} if there is $C_i$ containing it,
in other words, the restriction of a certain coordinate functional to this subspace is zero; otherwise, it  will be called {\it non-degenerate}. 

Let $k\in \{1,\dots,n-1\}$.
The {\it Grassmann graph} $\Gamma_k(V)$ is the simple graph whose vertices are $k$-dimensional subspaces of $V$
and two such subspaces are adjacent vertices of  this graph (vertices connected by an edge) if and only if their intersection is $(k-1)$-dimensional. 
For this reason, we say that two $k$-dimensional subspaces are {\it adjacent} if their intersection is $(k-1)$-dimensional which is equivalent to 
the fact that the sum of these subspaces is $(k+1)$-dimensional.

Denote by $\Delta_k(V)$ the subgraph of $\Gamma_k(V)$ induced by the set of non-degenerate $k$-dimensional subspaces. 
In other words, the vertex set of $\Delta_k(V)$ is the set of non-degenerate $k$-dimensional subspaces and 
two such subspaces are adjacent vertices of $\Delta_k(V)$ if and only if they are adjacent vertices of $\Gamma_k(V)$.
If ${\mathbb F}={\mathbb F}_q$ is the field of $q$ elements, then $k$-dimensional subspaces of $V$ are identified with 
linear $[n,k]_q$ codes (see \cite[Section 1.1]{TVN}) and $\Delta_k(V)$ is the graph of non-degenerate linear $[n,k]_q$ codes $\Gamma(n,k)_q$ investigated in \cite{KP1,KP2,P1,P2}. 

For $k=1,n-1$ the graphs $\Gamma_k(V)$ and $\Delta_k(V)$ are complete (any two distinct vertices are adjacent). 
In what follows, we will always assume that $1<k<n-1$ which implies that $n\ge 4$. 

\begin{theorem}\label{theorem-1}
If $|{\mathbb F}|>n-k$, then the Grassmann graph $\Gamma_k(V)$ can be recovered from the subgraph of non-degenerate subspaces $\Delta_k(V)$.
\end{theorem}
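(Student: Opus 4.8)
The plan is to reconstruct the missing (degenerate) vertices of $\Gamma_k(V)$ purely from the combinatorial structure of $\Delta_k(V)$, and then show that the adjacency relation of $\Gamma_k(V)$ is determined by that of $\Delta_k(V)$ together with the newly added vertices. The guiding analogy, as the introduction indicates, is the recovery of a projective space from an affine space: the degenerate subspaces should appear as equivalence classes of certain configurations of non-degenerate subspaces, playing the role of points at infinity.

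First I would recall the structure of maximal cliques in a Grassmann graph. For $1<k<n-1$, the maximal cliques of $\Gamma_k(V)$ are of two types: the \emph{stars} $[U\rangle_k=\{W : U\subset W,\ \dim W=k\}$ determined by a $(k-1)$-dimensional subspace $U$, and the \emph{tops} $\langle U]_k=\{W : W\subset U,\ \dim W=k\}$ determined by a $(k+1)$-dimensional subspace $U$. I would then analyze how each maximal clique meets the non-degenerate locus: a star survives (restricts to a large clique in $\Delta_k(V)$) precisely when its defining $(k-1)$-space is non-degenerate, and similarly for tops. The key computation, which needs the hypothesis $|{\mathbb F}|>n-k$, is to check that under this cardinality bound a star $[U\rangle_k$ with $U$ non-degenerate still contains enough non-degenerate $k$-spaces to remain a \emph{maximal} clique inside $\Delta_k(V)$ — i.e. that degeneracy does not destroy or merge these cliques. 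The bound $|{\mathbb F}|>n-k$ should be exactly what guarantees that one can always complete $U$ to a non-degenerate $k$-space avoiding the finitely many coordinate hyperplanes, so that the clique is non-empty and of the expected type.

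Next I would identify, combinatorially inside $\Delta_k(V)$, a special family of maximal cliques whose Grassmannian extensions all pass through a single degenerate $k$-subspace. Concretely, I expect to look at a \emph{class} of pairwise disjoint maximal cliques of $\Delta_k(V)$ (the paper's phrasing), extend each to a maximal clique of $\Gamma_k(V)$, and prove that these extensions share exactly one common vertex, which is degenerate. This is the analogue of a parallel class of affine lines meeting at a point at infinity. The recovered vertex set is then $V(\Delta_k(V))$ together with one new vertex for each such class; adjacency between a new vertex and an old one, and between two new vertices, is read off from whether the corresponding cliques/classes interact in the prescribed way. I would verify that this reconstructed graph is isomorphic to $\Gamma_k(V)$ by checking that the added vertices biject with degenerate $k$-subspaces and that all adjacencies (degenerate–degenerate, degenerate–non-degenerate, non-degenerate–non-degenerate) agree.

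The main obstacle I anticipate is the characterization step: showing that the special classes of disjoint cliques can be recognized \emph{intrinsically} within $\Delta_k(V)$, with no reference to the ambient degenerate subspaces, and that each such recognizable class corresponds to exactly one degenerate subspace (no spurious classes, no collisions). This is where the distinction between stars and tops, and the loss of duality noted in the introduction, will force care: one must separate the two clique types using only metric/combinatorial invariants of $\Delta_k(V)$, and control how coordinate hyperplanes cut across them. I would expect the cardinality hypothesis $|{\mathbb F}|>n-k$ to enter repeatedly to guarantee that the relevant cliques are large enough to be distinguished and that the required non-degenerate completions always exist. Finally, following the excerpt's remark that $n=4$ is special, I would treat $n=4$ (where $k=2$ and the two clique types are more symmetric) as a separate case, since the dimension count that separates stars from tops degenerates there.
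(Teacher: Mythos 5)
Your outline reproduces the strategy announced in the paper's introduction, and most of its ingredients are the right ones, but the two steps you flag as ``anticipated obstacles'' are precisely where all the content of the proof lives, and you do not supply the ideas that make them work. The crux is the intrinsic definition of the classes of disjoint cliques. The paper's solution is to put two auxiliary graph structures on the set of stars (resp.\ tops) of $\Delta_k(V)$: in $\Delta_s$ (resp.\ $\Delta_t$) two cliques are adjacent when every vertex of each is adjacent to some vertex of the other, and in $\Delta'_s$ (resp.\ $\Delta'_t$) one additionally requires the cliques to be disjoint. Lemmas \ref{lemma-S} and \ref{lemma-T} show that the first relation encodes adjacency of the defining $(k-1)$- or $(k+1)$-dimensional subspaces, the new vertices are the sets that are maximal cliques in \emph{both} graphs simultaneously, and Lemmas \ref{lemma-sS} and \ref{lemma-sT} identify these with degenerate $k$-dimensional subspaces. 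All of this rests on Lemma \ref{lemma-F} (any non-degenerate $(k+1)$-space $Y$ and any $(k-1)$-space $X\subset Y$ admit a non-degenerate $k$-space in between), which is the key place where the hypothesis $|{\mathbb F}|>n-k$ enters; without an intrinsic recognition device of this kind your ``classes'' are not defined inside $\Delta_k(V)$ and the reconstruction does not get off the ground.

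There is also a concrete error: you assert that a star survives as a maximal clique of $\Delta_k(V)$ precisely when its defining $(k-1)$-space is non-degenerate, and you propose to verify maximality only for such stars. In fact Proposition \ref{prop-star} shows that ${\mathcal S}^c(X)$ is a maximal clique of $\Delta_k(V)$ for \emph{every} $(k-1)$-dimensional $X$ once $|{\mathbb F}|\ge 3$, and the stars with degenerate defining subspace are exactly the ones that get grouped into the classes ${\mathfrak S}_X$ indexed by degenerate $k$-subspaces; a maximal star (one with non-degenerate defining subspace) is an isolated vertex of $\Delta'_s$ and belongs to no class. If you discard the stars with degenerate defining subspace, the star-based reconstruction has nothing to work with. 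Moreover, the survival statement you attribute to the hypothesis $|{\mathbb F}|>n-k$ (completing a non-degenerate $U$ to a non-degenerate $k$-space) is automatic, since any subspace containing a non-degenerate subspace is non-degenerate; the hypothesis is needed elsewhere, e.g.\ to show that ${\mathcal T}^c(Y)$ remains a maximal clique for non-degenerate $Y$ (Lemma \ref{lemma-top}). Finally, your treatment of $n=4$ is only a declaration of intent: in that case there are no special sets at all (the auxiliary graphs $\Delta_s$ and $\Delta_t$ are complete), and the paper must introduce a different family of cliques (the linear maximal cliques of $\Delta'_t$) and separate them from the non-linear ones by cardinality and intersection arguments in the dual projective space.
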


\section{Maximal cliques}
\subsection{Maximal cliques of the Grassmann graph}
It is well-known that there are precisely the following two types of maximal cliques of $\Gamma_k(V)$:
\begin{enumerate}
\item[$\bullet$] the star ${\mathcal S}(X)$ consisting of all $k$-dimensional subspaces containing a certain $(k-1)$-dimensional subspace $X$;
\item[$\bullet$] the top ${\mathcal T}(Y)$ formed by all  $k$-dimensional subspaces contained in a certain $(k+1)$-dimensional subspace $Y$. 
\end{enumerate}
The intersection of two distinct stars or two distinct tops is empty or a single vertex. 
Two distinct stars have a non-empty intersection if and only if the corresponding $(k-1)$-dimensional subspaces are adjacent.
Similarly, the intersection of two distinct tops is non-empty if and only if the corresponding $(k+1)$-dimensional subspaces are adjacent.

The intersection of a star and a top  is empty or it contains more than one vertex
(the second possibility is realized if and only if  the corresponding $(k-1)$-dimensional and $(k+1)$-dimensional subspaces are incident).
In the second case, the intersection is called a {\it line} of $\Gamma_k(V)$; 
it contains precisely $q+1$ vertices if ${\mathbb F}={\mathbb F}_q$ and infinitely many if ${\mathbb F}$ is infinite.
Stars and tops together with lines contained in them are projective spaces of (projective) dimension $n-k$ and $k$, respectively. 
If ${\mathbb F}={\mathbb F}_q$, then  stars and tops contain $[n-k+1]_q$ and $[k+1]_q$ vertices (respectively), where 
$$[t]_q=\frac{q^t-1}{q-1}=q^{t-1}+\dots+q+1$$
is the number of points in ${\rm PG}(t-1,q)$.

\subsection{Maximal cliques in the subgraph of non-degenerate subspaces}
Following \cite{KP2},
for every $(k-1)$-dimensional subspace $X$ and every $(k+1)$-dimensional subspace $Y$
we denote by ${\mathcal S}^c(X)$ and ${\mathcal T}^c(Y)$ the intersections of the star ${\mathcal S}(X)$ and the top ${\mathcal T}(X)$
with the set of non-degenerate subspaces. 
Each of these intersections is a (not necessarily maximal) cliques of $\Delta_k(V)$ (if it is not empty). 
For example, if $Y$ is degenerate, then ${\mathcal T}^c(Y)$ is empty. 
We say that ${\mathcal S}^c(X)$ or ${\mathcal T}^c(Y)$ is  a {\it star} or, respectively, a {\it top} of $\Delta_k(V)$
only when it is a maximal clique of  $\Delta_k(V)$. 
If $X$ is non-degenerate, then every subspace containing $X$ is non-degenerate and ${\mathcal S}(X)={\mathcal S}^c(X)$
is a maximal clique in both $\Gamma_k(V)$ and $\Delta_k(V)$; such cliques are said to be {\it maximal stars} of $\Delta_k(V)$.
Since each clique of $\Delta_k(V)$ is a clique of $\Gamma_k(V)$, 
every maximal clique of $\Delta_k(V)$  is a star or a top.

\begin{prop}\label{prop-star}
If $|{\mathbb F}|\ge 3$, then ${\mathcal S}^c(X)$ is a star of $\Delta_k(V)$ for every $(k-1)$-dimensional subspace $X$
and  there is no $(k+1)$-dimensional subspace $Y$ such that ${\mathcal S}^c(X)={\mathcal T}^c(Y)$.
\end{prop}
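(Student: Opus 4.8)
The plan is to pass to the quotient $V/X$ and reduce the statement to a spanning property of the complement of a union of coordinate hyperplanes in a projective space. Write $\pi\colon V\to V/X$ for the quotient map. Since $\dim X=k-1$, the $k$-dimensional subspaces containing $X$ correspond bijectively to the points of $\mathrm{PG}(V/X)$, which has projective dimension $n-k\ge 2$ (as $k\le n-2$); this identifies ${\mathcal S}(X)$ with $\mathrm{PG}(V/X)$. Let $I=\{i:X\subseteq C_i\}$ and put $m=|I|$. For $i\in I$ the functional $x_i$ vanishes on $X$, hence descends to a functional on $V/X$ whose kernel is the hyperplane $H_i:=C_i/X$, while for $i\notin I$ no $k$-subspace containing $X$ can lie in $C_i$. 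Thus a $k$-subspace $U=X+\langle v\rangle$ is non-degenerate precisely when $\pi(v)$ avoids every $H_i$, $i\in I$, so ${\mathcal S}^c(X)$ is identified with the set of points of $\mathrm{PG}(V/X)$ lying off $\bigcup_{i\in I}H_i$.

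The key step, which I expect to be the only real obstacle, is to show that if $|{\mathbb F}|\ge 3$ then the points off $\bigcup_{i\in I}H_i$ span $V/X$. First I would check that the descended functionals $\{x_i:i\in I\}$ are linearly independent on $V/X$: their common kernel is $\bigl(\bigcap_{i\in I}C_i\bigr)/X$, of dimension $(n-m)-(k-1)$, so the drop in dimension equals the number $m$ of functionals, which forces independence (and incidentally $m\le\dim(V/X)$). Completing them to a dual basis $u_1,\dots,u_d$ of $V/X$ gives coordinates in which a point is non-degenerate exactly when its first $m$ coordinates are all nonzero, the remaining coordinates being free. In these coordinates I would recover every basis vector from admissible points: starting from the all-ones vector $u_1+\dots+u_m$, its difference with $u_1+\dots+u_m+u_j$ yields $u_j$ for a free coordinate $j$, and its difference with $u_1+\dots+a\,u_i+\dots+u_m$ (where $a\ne 0,1$) yields $(a-1)u_i$, hence $u_i$, for a constrained coordinate $i$. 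Here the hypothesis $|{\mathbb F}|\ge 3$ is used exactly once, to choose a scalar $a\notin\{0,1\}$; over ${\mathbb F}_2$ the conclusion genuinely fails, which matches the stated bound.

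Granting this spanning fact, both assertions follow quickly. For maximality, let $\mathcal M$ be a maximal clique of $\Gamma_k(V)$ containing the clique ${\mathcal S}^c(X)$; by the classification recalled above it is a star ${\mathcal S}(X')$ or a top ${\mathcal T}(Y')$. Since the non-degenerate directions span $V/X$, we have $\sum_{U\in{\mathcal S}^c(X)}U=V$, so ${\mathcal S}^c(X)$ cannot lie in any top, because a top spans a $(k+1)$-dimensional space and $k+1<n$; hence $\mathcal M={\mathcal S}(X')$. The same spanning gives $\bigcap_{U\in{\mathcal S}^c(X)}U=X$ (two independent non-degenerate directions already force the intersection down to $X$), so $X'\subseteq X$ and, by equality of dimensions, $X'=X$. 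Therefore every non-degenerate $k$-subspace adjacent to all of ${\mathcal S}^c(X)$ already contains $X$, i.e.\ lies in ${\mathcal S}^c(X)$; this is precisely the statement that ${\mathcal S}^c(X)$ is a maximal clique of $\Delta_k(V)$, hence a star.

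Finally, suppose ${\mathcal S}^c(X)={\mathcal T}^c(Y)$ for some $(k+1)$-dimensional $Y$. The common set would on the one hand span $V$, by the left-hand description, and on the other hand be contained in $Y$, by the right-hand description, which is impossible since $\dim Y=k+1<n$. This rules out the equality ${\mathcal S}^c(X)={\mathcal T}^c(Y)$ and completes the argument.
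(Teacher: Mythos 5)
Your proof is correct, and it shares the paper's overall skeleton --- rule out containment of $\mathcal{S}^c(X)$ in a top, then invoke the star/top classification of maximal cliques of $\Gamma_k(V)$ and the fact that distinct stars meet in at most one vertex --- but the key step is implemented by a genuinely different argument. The paper works directly in $V$: for each $(k+1)$-dimensional $Y\supseteq X$ it produces a non-degenerate $1$-dimensional subspace $P\not\subseteq Y$ (if $Y$ contains a vector with all coordinates nonzero, one perturbs a single coordinate, which is where $|\mathbb{F}|\ge 3$ enters, to escape $Y$ while keeping all coordinates nonzero), so that $X+P$ lies in $\mathcal{S}^c(X)\setminus\mathcal{T}^c(Y)$. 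You instead pass to $V/X$, check that the coordinate functionals indexed by $I=\{i: X\subseteq C_i\}$ descend to linearly independent functionals there, and prove by an explicit dual-basis computation that the complement of the resulting hyperplane arrangement spans $V/X$; your single use of $|\mathbb{F}|\ge 3$ (choosing $a\notin\{0,1\}$) plays exactly the role of the paper's coordinate perturbation. Your version yields slightly more: the members of $\mathcal{S}^c(X)$ themselves sum to $V$ and intersect in exactly $X$, so the unique maximal clique of $\Gamma_k(V)$ containing $\mathcal{S}^c(X)$ is $\mathcal{S}(X)$, and both assertions of the proposition (maximality, and the impossibility of $\mathcal{S}^c(X)=\mathcal{T}^c(Y)$) fall out of the same spanning statement. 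The paper's route is shorter and avoids the independence bookkeeping in the quotient, but the two arguments are equally valid and both correctly locate the role of the hypothesis on $|\mathbb{F}|$.
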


\begin{proof}
Observe that for every proper subspace $Y\subset V$ there is a non-degenerate $1$-dimensional subspace 
which is not contained in $Y$. This is clear if $Y$ does not contain non-degenerate $1$-dimensional subspaces.
Suppose that $x=(x_1,\dots, x_n)\in Y$ and every $x_i$ is non-zero. 
Since $Y$ is defined by a system of linear equations and $|{\mathbb F}|\ge 3$, for a certain $i$ there is non-zero $x'_i$ distinct from $x_i$ and such that
the vector obtained from $x$ via replacing $x_i$ by $x'_i$ does not belong to $Y$.

For every $(k+1)$-dimensional subspace $Y$ containing $X$
we take a non-degenerate $1$-dimensional subspace $P$ which is not contained in $Y$. 
Then $X+P$ belongs to ${\mathcal S}^c(X)$ and does not belong to ${\mathcal T}^c(Y)$. 
So, there is no ${\mathcal T}^c(Y)$ containing ${\mathcal S}^c(X)$. 
In particular, ${\mathcal S}^c(X)$ contains more than one vertex 
(indeed, if ${\mathcal S}^c(X)=\{A\}$, then ${\mathcal S}^c(X)\subset {\mathcal T}^c(Y)$ for every $(k+1)$-dimensional subspace 
$Y$ containing $A$). 
Since the intersection of two distinct stars of $\Gamma_k(V)$ contains at most one vertex,
${\mathcal S}^c(X)$ is a maximal clique of $\Delta_k(V)$.
\end{proof}

\begin{rem}{\rm 
If $|{\mathbb F}|=2$, then ${\mathcal S}^c(X)$ is a star of $\Delta_k(V)$ only when
$X$ is contained in at most $n-k-1$ distinct $C_i$,
see \cite[Proposition 2]{KP2}.
}\end{rem}

Let $Y$ be a non-degenerate $(k+1)$-dimensional subspace of $V$. 
Then ${\mathcal T}^c(Y)$ is obtained from ${\mathcal T}(Y)$ by removing 
$$Y\cap C_1,\dots, Y \cap C_n$$
(it is possible that $Y\cap C_i=Y\cap C_j$ for some distinct $i,j$).
By  \cite[Lemma 2]{KP2}, we have
$$\max\{0, [k + 1]_q - n\} \le |{\mathcal T}^c(Y)| \le [k + 1]_q -k-1.$$

\begin{prop}\label{prop-top}
The following assertions are fulfilled:
\begin{enumerate}
\item[{\rm (1)}] If ${\mathbb F}={\mathbb F}_q$,  then ${\mathcal T}^c(Y)$ is a top of $\Delta_k(V)$ for every 
non-degenerate $(k+1)$-dimensional subspace $Y$ only when 
$$[k + 1]_q-(q+1)> n.$$
\item[{\rm (2)}] If ${\mathbb F}$ is infinite, then ${\mathcal T}^c(Y)$ is a top of $\Delta_k(V)$ for every 
non-degenerate $(k+1)$-dimensional subspace $Y$.
\end{enumerate}
\end{prop}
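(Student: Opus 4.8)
The plan is to work inside the top $\mathcal{T}(Y)$, identify it with a projective space, and reduce maximality of $\mathcal{T}^c(Y)$ to a covering condition. Fix a non-degenerate $(k+1)$-dimensional subspace $Y$. Its $k$-dimensional subspaces are exactly the hyperplanes of $Y$, so $\mathcal{T}(Y)$ is the point set of the dual projective space $\mathbb{P}(Y^*)$ of dimension $k$, and the lines of $\Gamma_k(V)$ contained in $\mathcal{T}(Y)$, namely the pencils $\mathcal{S}(X)\cap\mathcal{T}(Y)$ with $X\subseteq Y$ of dimension $k-1$, are precisely the lines of $\mathbb{P}(Y^*)$. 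A $k$-dimensional $A\subseteq Y$ is degenerate if and only if $A=Y\cap C_i$ for some $i$, and $Y\cap C_i$ is the kernel of the functional $\phi_i:=x_i|_Y\in Y^*$, which is nonzero since $Y$ is non-degenerate. Hence $\mathcal{T}^c(Y)$ corresponds to the complement $\mathbb{P}(Y^*)\setminus\{[\phi_1],\dots,[\phi_n]\}$ of at most $n$ points.

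The key step is the criterion: $\mathcal{T}^c(Y)$ is a top of $\Delta_k(V)$ if and only if it is not contained in a line of $\Gamma_k(V)$, equivalently the points $[\phi_1],\dots,[\phi_n]$ do not cover the complement of a line of $\mathbb{P}(Y^*)$. For the direction ``not contained in a line implies top'', which is field-independent, I would use that any two distinct members of $\mathcal{T}^c(Y)$ span $Y$, so three non-collinear members $B_1,B_2,B_3$ form a top-type triangle; if a non-degenerate $A$ were adjacent to all of $\mathcal{T}^c(Y)$ but lay outside $Y$, then not all of the triangles $\{A,B_i,B_j\}$ could be of star type (that would force a common $(k-1)$-dimensional subspace and hence collinearity of $B_1,B_2,B_3$), so one of them is of top type, giving $A\subseteq Y$, a contradiction. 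For the converse, if $\mathcal{T}^c(Y)\subseteq\mathcal{S}(X)\cap\mathcal{T}(Y)$ then $\mathcal{T}^c(Y)\subseteq\mathcal{S}^c(X)$, and by Proposition \ref{prop-star} the latter is a star of $\Delta_k(V)$ different from every $\mathcal{T}^c(Y')$, hence strictly larger, so $\mathcal{T}^c(Y)$ is not maximal.

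With the criterion in hand the two assertions follow by counting. For part (2), when $\mathbb{F}$ is infinite and $k\ge 2$ the complement of any line of $\mathbb{P}(Y^*)$ is infinite, so the finite set $\{[\phi_i]\}$ cannot cover it; thus $\mathcal{T}^c(Y)$ is never contained in a line and is always a top. For part (1), the sufficiency of $[k+1]_q-(q+1)>n$ is immediate from the bound $|\mathcal{T}^c(Y)|\ge[k+1]_q-n$ of \cite[Lemma 2]{KP2}: this lower bound then exceeds $q+1$, the number of points on a line, so $\mathcal{T}^c(Y)$ cannot lie on a line. For the stated necessity I would argue contrapositively: if $[k+1]_q-(q+1)\le n$, the complement of a line has at most $n$ points, so I can choose functionals $\phi_1,\dots,\phi_n$ whose classes contain this complement; since $k\ge 2$ these span $Y^*$, so they define a genuine non-degenerate $(k+1)$-dimensional subspace $Y$ with $\mathcal{T}^c(Y)$ contained in a line, hence not a top.

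I expect the main obstacle to be the converse direction of the criterion together with its dependence on Proposition \ref{prop-star}, which requires $|\mathbb{F}|\ge 3$. For $q=2$ the claim that $\mathcal{S}^c(X)$ is always a star fails, and the necessity argument in part (1) must instead invoke the characterization of \cite[Proposition 2]{KP2} together with a direct extension of $\mathcal{T}^c(Y)$ by a non-degenerate $k$-subspace containing the relevant $(k-1)$-dimensional subspace but not contained in $Y$. A smaller point to verify is that the configuration built in the necessity step really yields a $(k+1)$-dimensional non-degenerate subspace, which is exactly where the hypothesis $1<k$ is used, since the complement of a line spans $\mathbb{P}(Y^*)$ only in projective dimension at least two.
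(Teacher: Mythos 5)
Your argument is correct in substance but organized rather differently from the paper's. For assertion (2) the paper argues directly: $\mathcal{T}^c(Y)$ is infinite, it cannot lie in any $\mathcal{S}^c(X)$ because only finitely many $k$-dimensional subspaces of $Y$ are degenerate while infinitely many avoid a given $X$, and distinct tops of $\Gamma_k(V)$ meet in at most one vertex. Your reformulation in the dual space $\mathbb{P}(Y^*)$ (``the $n$ removed points cannot cover the complement of a line'') is the same idea, but you replace the appeal to the classification of maximal cliques of $\Gamma_k(V)$ by a self-contained triangle argument showing that any common neighbour of three non-collinear members of $\mathcal{T}^c(Y)$ lies in $Y$; that argument is field-independent and sound. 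For assertion (1) the paper gives no proof at all --- it only cites \cite[Corollary 1]{KP2} --- so your covering criterion, the counting proof of sufficiency, and the explicit construction of a bad $Y$ from $n$ functionals covering the complement of a line are genuinely new relative to the text. Proving both directions is also the right call, since the paper later uses (1) in the sufficiency direction (Lemma \ref{lemma-top}) even though the statement is phrased as ``only when''. What the paper's route buys is brevity; what yours buys is a uniform geometric picture in which both parts and the bound $\max\{0,[k+1]_q-n\}\le|\mathcal{T}^c(Y)|$ become transparent.

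The one soft spot is the one you flag yourself: the converse half of your criterion ($\mathcal{T}^c(Y)$ contained in a line implies not maximal) is routed through Proposition \ref{prop-star} and hence needs $|\mathbb{F}|\ge 3$, leaving the $q=2$ case of the necessity in (1) only sketched. It closes more easily than you suggest. In your construction you may place all $n$ classes $[\phi_1],\dots,[\phi_n]$ off the chosen line $\ell$; if $X$ is the $(k-1)$-dimensional subspace with $\ell=\mathcal{S}(X)\cap\mathcal{T}(Y)$, then $X\subseteq C_i$ would force $X\subseteq Y\cap C_i=\ker\phi_i$, i.e. $[\phi_i]\in\ell$, so $X$ is non-degenerate. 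Consequently $\mathcal{S}(X)=\mathcal{S}^c(X)$ is a maximal star containing $\mathcal{T}^c(Y)=\ell$ together with vertices $X+P$ with $P\not\subseteq Y$, so $\mathcal{T}^c(Y)$ is properly contained in a clique of $\Delta_k(V)$ and is not a top --- with no appeal to Proposition \ref{prop-star} or to $|\mathbb{F}|\ge 3$.
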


\begin{proof}
The first statement is known, see \cite[Corollary 1]{KP2}. 

If ${\mathbb F}$ is infinite, then ${\mathcal T}^c(Y)$ contains infinitely many vertices 
and there are at most $n$ distinct $k$-dimensional subspaces of $Y$ which are degenerate and, consequently,  do not belong to ${\mathcal T}^c(Y)$. 
For every $(k-1)$-dimensional subspace $X\subset Y$ there are infinitely many $k$-dimensional subspaces of 
$Y$ which do not contain $X$; almost all of them (except some finite number) are non-degenerate
which means that ${\mathcal T}^c(Y)$ is not contained  ${\mathcal S}^c(X)$.
So, there is no ${\mathcal S}^c(X)$ containing ${\mathcal T}^c(Y)$; in particular, ${\mathcal T}^c(Y)$ contains more than one vertex. 
Since the intersection of two distinct tops of $\Gamma_k(V)$ contains at most one vertex,
${\mathcal T}^c(Y)$ is a maximal clique of $\Delta_k(V)$.
\end{proof}

Note that a star is not a top and vice versa
(see Proposition \ref{prop-star} for the case $|{\mathbb F}|\ge 3$ and \cite[Proposition 5]{P1} for the general case). 
This implies that every maximal clique of $\Delta_k(V)$ (a star or a top) contains at least three vertices.

As in the Grassmann graph, the intersection of two distinct stars or two distinct tops of $\Delta_k(V)$ contains at most one vertex.
The intersection of two distinct maximal cliques of $\Delta_k(V)$ will be called a {\it line} of $\Delta_k(V)$ if it contains more than one vertex. 
This is possible only when maximal cliques are a star and a top. 
The following example shows that there are pairs of stars and tops intersecting precisely in one vertex. 

\begin{exmp}\label{exmp1}{\rm
Suppose that ${\mathbb F}={\mathbb F}_q$ and $q=n-k+1\ge 3$. 
Consider the star ${\mathcal S}^c(X)$, where 
$$X=C_1\cap\dots\cap C_{n-k+1}$$
(this is a star of $\Delta_k(V)$ by Proposition \ref{prop-star}).
Let $P$ be the $1$-dimensional subspace containing the vector $(1,\dots,1)$
and let $Q$ be any $1$-dimensional subspace containing a vector whose first $n-k+1=q$ coordinates are mutually distinct elements of ${\mathbb F}_q$. 
The  $(k+1)$-dimensional subspace $$Y=X+P+Q$$ is non-degenerate and ${\mathcal T}^c(Y)$ is a top of $\Delta_k(V)$
(it will be shown later that  ${\mathcal T}^c(Y)$ is a top of $\Delta_k(V)$ for every non-degenerate $(k+1)$-dimensional subspace $Y$ if $|{\mathbb F}|>n-k$, see 
Lemma \ref{lemma-top}).
If $t_1,\dots, t_{n-k+1}$ are the elements of ${\mathbb F}_q$,
then  every $1$-dimensional subspace of $P+Q$ distinct from $P$ contains the vector 
$$(t_i,\dots,t_i)-(t_1,\dots,t_{n-k+1},\dots)$$
for a certain $i\in \{1,\dots,n-k+1\}$.
This means that the hyperplanes $C_1,\dots, C_{n-k+1}$ intersect $P+Q$ in mutually distinct $1$-dimensional subspaces.
Every $k$-dimensional subspace belonging to ${\mathcal S}(X)\cap {\mathcal T}(Y)$ is $X+P'$,
where $P'$ is a $1$-dimensional subspace of $P+Q$. 
Therefore, ${\mathcal S}^c(X)\cap {\mathcal T}^c(Y)$ contains $X+P$ only.
}\end{exmp}

Every maximal clique of $\Delta_k(V)$ can be considered as a point-line geometry
whose lines are defined in terms of  the adjacency relation. 
We use this concept to characterize  maximal stars of $\Delta_k(V)$ if  ${\mathbb F}$ is infinite.

\begin{prop}\label{prop-char-m-s}
The following assertions are fulfilled:
\begin{enumerate} 
\item[{\rm (1)}] In the case when ${\mathbb F}={\mathbb F}_q$, a maximal clique of $\Delta_k(V)$ is a maximal star if and only if 
it contains precisely $[n-k+1]_q$ vertices.
\item[{\rm (2)}] In the case when ${\mathbb F}$ is infinite, 
a maximal clique of $\Delta_k(V)$ is a maximal star if and only if this clique together with all lines it contains is a projective space. 
\end{enumerate}
\end{prop}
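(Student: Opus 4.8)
The plan is to exploit the established dichotomy that every maximal clique of $\Delta_k(V)$ is a star ${\mathcal S}^c(X)$ or a top ${\mathcal T}^c(Y)$, together with the fact that the maximal stars are exactly the cliques ${\mathcal S}^c(X)={\mathcal S}(X)$ with $X$ non-degenerate. Both ``only if'' implications are then immediate. A maximal star is a full star ${\mathcal S}(X)\cong{\rm PG}(n-k,{\mathbb F})$, so in the finite case it has exactly $[n-k+1]_q$ vertices. Moreover, when $X$ is non-degenerate every $(k+1)$-subspace $Y'\supset X$ is non-degenerate and every $k$-subspace containing $X$ is non-degenerate, so each pencil ${\mathcal S}(X)\cap{\mathcal T}(Y')$ survives intact as a line of $\Delta_k(V)$; hence the maximal star together with its lines is the full projective space ${\rm PG}(n-k,{\mathbb F})$. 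The real content is in the two ``if'' directions, where tops and degenerate stars must be excluded.

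For statement (1) I would argue by counting. If the clique is a star ${\mathcal S}^c(X)$, then ${\mathcal S}^c(X)\subseteq{\mathcal S}(X)$ with $|{\mathcal S}(X)|=[n-k+1]_q$; a degenerate $X$ lies in some $C_i$, and since $k<n-1$ it is contained in degenerate $k$-subspaces, so the inclusion is strict unless $X$ is non-degenerate. Thus $|{\mathcal S}^c(X)|=[n-k+1]_q$ forces a maximal star. If the clique is a top ${\mathcal T}^c(Y)$, write $|{\mathcal T}^c(Y)|=[k+1]_q-r$, where $r$ is the number of distinct hyperplanes among $Y\cap C_1,\dots,Y\cap C_n$, so that $k+1\le r\le n$. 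Equality $|{\mathcal T}^c(Y)|=[n-k+1]_q$ would force $r=[k+1]_q-[n-k+1]_q$, which is $\le 0$ when $k\le n-k$, and when $k>n-k$ equals $\sum_{j=n-k+1}^{k}q^{\,j}\ge q^{k}\ge 2^{k}>n\ge r$, using $n<2k\le 2^{k}$. In every case this contradicts $1\le r\le n$, so no top attains the value $[n-k+1]_q$, completing (1). The only slightly delicate point is the inequality $q^{k}>n$ in the regime $k>n-k$.

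For statement (2) the ``if'' direction is the heart of the matter, and I would isolate a single geometric obstruction covering both remaining cases. Both a top ${\mathcal T}^c(Y)$ and a degenerate star ${\mathcal S}^c(X)$ arise from a projective space (${\rm PG}(k,{\mathbb F})$ or ${\rm PG}(n-k,{\mathbb F})$, of dimension $\ge 2$) by deleting a non-empty set $R$ of points: for a top, $R=\{Y\cap C_i\}$ is finite and non-empty because $Y$ is non-degenerate; for a degenerate star, $R$ is the union of the hyperplanes $C_i/X$ with $X\subseteq C_i$, again non-empty. Fixing a deleted point $p\in R$, I would choose two distinct lines $\ell_1,\ell_2$ through $p$ spanning a plane and not contained in any deleted hyperplane. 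Since ${\mathbb F}$ is infinite, each $\ell_i$ meets $R$ in only finitely many points, so $\ell_i\cap{\mathcal C}$ still has at least two vertices and is a genuine line of $\Delta_k(V)$ contained in the clique ${\mathcal C}$; yet $(\ell_1\cap{\mathcal C})\cap(\ell_2\cap{\mathcal C})\subseteq\{p\}$ is empty. Two coplanar lines that fail to meet violate the projective-plane axiom, so neither a top nor a degenerate star is a projective space, and a maximal clique that is a projective space must be a maximal star. I expect this coplanar-disjoint-lines argument, together with the bookkeeping verifying that the punctured pencils retain at least two vertices and hence really are lines of $\Delta_k(V)$, to be the main obstacle.
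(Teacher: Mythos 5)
Your treatment of the two ``only if'' implications and of statement (1) is correct and essentially the paper's own argument: the paper likewise observes that a maximal star together with its lines is the full ${\rm PG}(n-k,{\mathbb F})$, that a non-maximal star is a proper subset of a $[n-k+1]_q$-element star, and that a top has either fewer than $[n-k+1]_q$ vertices (when $2k\le n$) or more (when $2k>n$); your direct estimate $\sum_{j=n-k+1}^{k}q^j\ge q^k\ge 2^k>n$ replaces the paper's citation of an external lemma and is fine, as is the (unproved but true) bound $r\ge k+1$, which follows since the $n$ hyperplanes $Y\cap C_i$ of $Y$ have trivial common intersection.

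The gap is in the last step of statement (2). Having produced two lines $\ell_1\cap{\mathcal C}$ and $\ell_2\cap{\mathcal C}$ of the point-line geometry ${\mathcal C}$ that are disjoint, you conclude that ${\mathcal C}$ is not a projective space because ``two coplanar lines that fail to meet violate the projective-plane axiom.'' But these lines are coplanar only in the \emph{ambient} projective space ${\mathcal S}(X)$ or ${\mathcal T}(Y)$; the abstract geometry ${\mathcal C}$ carries no a priori notion of plane, and a projective space of dimension $\ge 3$ contains plenty of disjoint lines, so the mere existence of two disjoint lines proves nothing. To obtain a contradiction you must either exhibit a violation of the Veblen--Young axiom inside ${\mathcal C}$ (choose $a,b\in\ell_1\cap{\mathcal C}$ and $c,d\in\ell_2\cap{\mathcal C}$ such that the auxiliary point $\overline{ad}\cap\overline{bc}$ survives the deletion --- an additional genericity check), or show that your two disjoint lines lie in a common subspace of ${\mathcal C}$ generated by three non-collinear points, which in a projective space would have to be a projective plane. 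The latter is exactly where the paper's proof of (2) does its work: for a non-maximal star it takes a triangle of lines $\ell_1,\ell_2,\ell_3$ on three points of ${\mathcal S}$ and chooses the two disjoint lines $\ell,\ell'$ so that each meets every $\ell_j$ in a surviving point, forcing both into the subspace spanned by the three points; for a top it chooses a plane containing exactly one deleted point, so that the punctured plane is itself a subspace of the geometry. Your argument is repairable along either of these lines, but as written the decisive step is missing.
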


\begin{proof}
Every maximal star of $\Delta_k(V)$ is a star of $\Gamma_k(V)$ and every line of $\Delta_k(V)$ contained in a maximal star is a line of $\Gamma_k(V)$,
i.e. every maximal star of $\Delta_k(V)$ together with all lines it contains is an $(n-k)$-dimensional projective space over ${\mathbb F}$.

(1).
Suppose that ${\mathbb F}={\mathbb F}_q$. 
Then maximal and non-maximal  stars of $\Delta_k(V)$ contain precisely $[n-k+1]_q$ vertices and, respectively,  less than $[n-k+1]_q$ vertices.
By \cite[Lemma 2]{KP2}, a top of $\Delta_k(V)$ consists of $[k+1]_q-l$ vertices for a certain $l\in\{k+1,\dots,n\}$.
If $2k\le n$, then 
$$[n-k+1]_q\ge [k+1]_q,$$
i.e. every maximal clique of $\Delta_k(V)$ distinct from a maximal star contains less than $[n-k+1]_q$ vertices.
If $2k>n$, then
$$[k+1]_q-n>[n-k+1]_q$$
(see \cite[Lemma 4]{KP2} for the details) which means that 
every top of $\Delta_k(V)$ contains greater than $[n-k+1]_q$ vertices;
therefore, every maximal clique of $\Delta_k(V)$ consisting of $[n-k+1]_q$ vertices is a maximal star.

(2). Suppose that ${\mathbb F}$ is infinite.
Every non-maximal star ${\mathcal S}={\mathcal S}^c(X)$ together with all lines it contains 
is the point-line geometry obtained from the projective space related to ${\mathcal S}(X)$ by removing a finite number of hyperplanes $H_1,\dots, H_m$. 
In this projective space, we take the plane spanned by points $X_1,X_2,X_3$ belonging to ${\mathcal S}$. 
Let $\ell_1,\ell_2,\ell_3$ be the lines of the projective space joining distinct $X_i$ and $X_j$.
Since ${\mathbb F}$ is infinite, the plane contains two distinct lines $\ell,\ell'$
passing through a point on a certain hyperplane $H_i$ and intersecting each $\ell_j$ in a point belonging to ${\mathcal S}$. 
Consider the subspace ${\mathcal X}$ of the point-line geometry ${\mathcal S}$ spanned by $X_1,X_2,X_3$. 
It contains the lines $\ell_j\cap {\mathcal S}$, $j\in \{1,2,3\}$. Each of these lines intersects both $\ell\cap {\mathcal S}$ and  $\ell'\cap {\mathcal S}$
which implies that the latter two lines are contained in  ${\mathcal X}$.
Since $\ell\cap {\mathcal S}$ and  $\ell'\cap {\mathcal S}$ are non-intersecting, ${\mathcal X}$  is not a projective plane 
and, consequently, the point-line geometry ${\mathcal S}$ is not a projective space.

Similarly, every top ${\mathcal T}={\mathcal T}^c(Y)$ together with all lines it contains 
is the point-line geometry obtained from the projective space related to ${\mathcal T}(Y)$ by removing a finite number of points $Y_1,\dots, Y_m$.
Since ${\mathbb F}$ is infinite, this projective space contain a plane ${\mathcal Y}$ with precisely one $Y_i$. 
Then ${\mathcal Y}\setminus \{Y_i\}$ is a subspace of ${\mathcal T}$ which is not a projective plane, i.e.
the point-line geometry ${\mathcal T}$ is not a projective space.
\end{proof}

Tops and non-maximal stars of $\Delta_k(V)$ can be characterized via their  intersections with maximal stars. 

\begin{prop}\label{prop-char-t-s}
A maximal clique of $\Delta_k(V)$ distinct from a maximal star  is a top if and only if its intersection with every maximal star is  the empty set or a line. 
A maximal clique of $\Delta_k(V)$ distinct from a maximal star  is a non-maximal star if and only if there is a maximal star intersecting this clique precisely in one vertex.
\end{prop}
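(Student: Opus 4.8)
The plan is to treat both biconditionals simultaneously by proving only two forward implications and letting the rest follow formally. Throughout, the relevant clique ${\mathcal C}$ is a maximal clique of $\Delta_k(V)$ that is not a maximal star, so by the classification of maximal cliques it is either a top or a non-maximal star. I would first show that a top meets every maximal star in the empty set or a line, and then that a non-maximal star meets \emph{some} maximal star in exactly one vertex. These two conclusions are mutually exclusive, because a line of $\Delta_k(V)$ has more than one vertex. Hence once both forward directions are established, a clique whose intersection with every maximal star is empty or a line cannot be a non-maximal star and must be a top, while a clique meeting some maximal star in a single vertex cannot be a top and must be a non-maximal star. This delivers all four implications at once.

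For a top ${\mathcal T}^c(Y)$ (with $Y$ non-degenerate) and an arbitrary maximal star ${\mathcal S}(X)$ (so $X$ is non-degenerate and ${\mathcal S}(X)={\mathcal S}^c(X)$), I would start from the description of star–top intersections in $\Gamma_k(V)$ recalled above: the set ${\mathcal S}(X)\cap{\mathcal T}(Y)$ is empty when $X\not\subseteq Y$ and is a line of $\Gamma_k(V)$ when $X\subseteq Y$. The only additional observation needed is that any $k$-dimensional subspace containing the non-degenerate $X$ is itself non-degenerate, since it cannot be contained in any $C_i$. Thus in the incident case every vertex of the line already belongs to $\Delta_k(V)$, so ${\mathcal S}^c(X)\cap{\mathcal T}^c(Y)$ is that whole line, which has more than one vertex and is therefore a line of $\Delta_k(V)$; in the non-incident case the intersection is empty. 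This settles the forward direction for tops.

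The substantive step is the non-maximal star ${\mathcal S}^c(X)$, where $X$ is a degenerate $(k-1)$-dimensional subspace. Here I must produce a non-degenerate $(k-1)$-dimensional subspace $X'$ adjacent to $X$: then ${\mathcal S}(X')$ is a maximal star, two distinct stars of $\Gamma_k(V)$ meet in at most one vertex, and the unique common $k$-dimensional subspace $X+X'$ is non-degenerate because it contains the non-degenerate $X'$, whence ${\mathcal S}^c(X)\cap{\mathcal S}(X')=\{X+X'\}$ is a single vertex. To construct $X'$ I would fix any $(k-2)$-dimensional subspace $W\subseteq X$ and, using the argument of Proposition \ref{prop-star} (available since $|{\mathbb F}|\ge 3$), choose a non-degenerate $1$-dimensional subspace $\langle v\rangle$ with $v\notin X$; then $X'=W+\langle v\rangle$ is $(k-1)$-dimensional, satisfies $X\cap X'=W$ because $v\notin X$, and is non-degenerate since the single vector $v$ already has all coordinates nonzero, so $X'\not\subseteq C_j$ for every $j$. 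I expect this construction to be the main obstacle; the key point is that forcing a fully non-degenerate vector into $X'$ makes non-degeneracy automatic and avoids any counting of how many $(k-1)$-dimensional subspaces adjacent to $X$ escape the coordinate hyperplanes.
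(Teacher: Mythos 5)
Your proof is correct and follows essentially the same route as the paper: a top meets a maximal star in nothing or in a full line because the star's centre $X$ is non-degenerate (so every $k$-dimensional subspace over $X$ is non-degenerate), and a non-maximal star ${\mathcal S}^c(X)$ is detected by constructing a non-degenerate $(k-1)$-dimensional $X'$ adjacent to the degenerate $X$ by adjoining a vector with all coordinates nonzero to a $(k-2)$-dimensional subspace of $X$. One small simplification: you do not need the argument of Proposition \ref{prop-star} (nor $|{\mathbb F}|\ge 3$, which this proposition does not assume) to arrange $v\notin X$ --- since $X$ is degenerate it lies in some $C_i$, so any vector with all coordinates nonzero is automatically outside $X$, which is exactly the paper's observation.
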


\begin{proof}
If the intersection of a top ${\mathcal T}^c(Y)$ and a maximal  star ${\mathcal S}(X)$ is non-empty, then the non-degenerate $(k-1)$-dimensional 
subspace  $X$ is contained in $Y$ which implies that this intersection is a line.

We need to show that for every non-maximal star ${\mathcal S}^c(X)$ there is a maximal star intersecting ${\mathcal S}^c(X)$ precisely in one vertex. 
We take any vector $x$ such that all coordinates of $x$ are  non-zero. 
Then $x\not\in X$ (since $X$ is degenerate)
and there is a non-degenerate $(k-1)$-dimensional subspace $X'$ containing $x$ and intersecting $X$
in a $(k-2)$-dimensional subspace. The maximal star ${\mathcal S}(X')$ is as required. 
\end{proof}

\section{Recovering of the Grassmann graph via stars or tops of the graph of non-degenerate subspaces}
Consider the simple graphs $\Delta_s$ and $\Delta_t$ whose vertices are stars and tops of $\Delta_k(V)$, respectively,
and whose edges are defined as follows: 
two distinct maximal cliques ${\mathcal X}_1$ and ${\mathcal X}_2$ (of the same type) are adjacent vertices of  the graph
$\Delta_x$, $x\in \{s,t\}$  if and only if  for each $i\in \{1,2\}$ every vertex of ${\mathcal X}_i$ is adjacent to at least one vertex of ${\mathcal X}_{3-i}$. 
This condition is satisfied, for example, if ${\mathcal X}_1$ and ${\mathcal X}_2$ have a non-empty intersection
(since these are distinct maximal cliques of the same type, this intersection is a single vertex).
Removing from $\Delta_x$ all edges connecting such pairs of cliques we obtain a simple graph which will be denoted by $\Delta'_x$.
So, $\Delta_x$ and $\Delta'_x$ have the same set of vertices and two distinct maximal cliques (of the corresponding type)
are adjacent vertices of $\Delta'_x$ if and only if they are adjacent in $\Delta_x$ and their intersection is empty.
Since $\Delta'_x$ is a subgraph of $\Delta_x$, every maximal clique of $\Delta'_x$ is a (not necessarily  maximal) clique of $\Delta_x$.

A set of stars  or a set of tops of $\Delta_k(V)$  is said to be {\it special} if it is a maximal clique in both $\Delta_s$ and $\Delta'_s$ or 
$\Delta_t$ and $\Delta'_t$, respectively.

Let us now consider the simple graphs $\Gamma_s$ and $\Gamma_t$ 
whose vertices are non-degenerate $k$-dimen\-sional subspaces (vertices of $\Delta_k(V)$)
and special sets of stars or special sets of tops, respectively. 
The edges of $\Gamma_x$, $x\in \{s,t\}$ are defined as follows:
\begin{enumerate} 
\item[$\bullet$] Two non-degenerate $k$-dimensional subspaces are adjacent vertices of $\Gamma_x$ if and only if they are adjacent.
\item[$\bullet$] A non-degenerate $k$-dimensional subspace and a special set of stars or tops are connected by an edge if and only if 
the subspace is a vertex in  a clique belonging to the special set.
\item[$\bullet$] Two distinct special sets of stars are adjacent vertices of $\Gamma_s$ if and only if there is a star belonging to each of these special sets.
\item[$\bullet$]
Two distinct special sets of tops ${\mathfrak T}_1$ and ${\mathfrak T}_2$ are adjacent vertices of $\Gamma_t$ if and only if
there is a star of $\Delta_k(V)$ which has a non-empty intersection with every top belonging to ${\mathfrak T}_i$, $i\in \{1,2\}$.
\end{enumerate}

\begin{theorem}\label{theorem-s-t}
Suppose that $|{\mathbb F}|>n-k$. Then the following assertions are fulfilled:
\begin{enumerate} 
\item[{\rm (1)}] 
If $k\ge 3$, then there is an isomorphism between the graphs $\Gamma_s$ and $\Gamma_k(V)$ whose restriction to $\Delta_k(V)$ is identity. 
\item[{\rm (2)}]
If $k\le n-3$, then there is an isomorphism between the graphs $\Gamma_t$ and $\Gamma_k(V)$ whose restriction to $\Delta_k(V)$ is identity. 
\end{enumerate}
\end{theorem}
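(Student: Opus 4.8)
The plan is to construct, for part (1), an explicit isomorphism $f\colon \Gamma_k(V)\to\Gamma_s$ that fixes every non-degenerate $k$-subspace and sends each \emph{degenerate} $k$-subspace $A$ to the set
\[
\mathfrak{S}_A=\{\,\mathcal{S}^c(X): X\subset A,\ \dim X=k-1\,\}.
\]
Since every subspace of a degenerate $A$ is itself degenerate, each $\mathcal{S}^c(X)$ above is a non-maximal star, so $\mathfrak{S}_A$ really is a set of stars. The theorem then splits into two tasks: (a) the assignment $A\mapsto\mathfrak{S}_A$ is a bijection from degenerate $k$-subspaces onto special sets of stars; and (b) $f$ preserves adjacency in both directions. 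Granting (a), task (b) is bookkeeping against the edge rules of $\Gamma_s$: two non-degenerate subspaces keep their adjacency by definition; a non-degenerate $A$ and a degenerate $B$ satisfy $A\sim B$ iff $A\cap B$ is a $(k-1)$-subspace $X\subset B$, i.e.\ iff $A\in\mathcal{S}^c(X)\in\mathfrak{S}_B$; and two degenerate $A,B$ satisfy $A\sim B$ iff they share a $(k-1)$-subspace, i.e.\ iff $\mathfrak{S}_A\cap\mathfrak{S}_B\neq\emptyset$. So the weight of the proof falls on (a).

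For (a) I would translate everything into the Grassmann graph $\Gamma_{k-1}(V)$. First I claim that two stars $\mathcal{S}^c(X_1),\mathcal{S}^c(X_2)$ are adjacent in $\Delta_s$ precisely when $X_1,X_2$ are adjacent in $\Gamma_{k-1}(V)$: if they are not adjacent then $\dim(X_1+X_2)\geq k+1$, and a non-degenerate $A_1\supset X_1$ chosen outside $X_1+X_2$ admits no adjacent partner above $X_2$; whereas if $X_1,X_2$ are adjacent, the partners of a given $A_1$ are exactly the pencil of $k$-subspaces lying between $X_2$ and the $(k+1)$-subspace $A_1+X_2$. Under the correspondence $\mathcal{S}^c(X)\leftrightarrow X$, the maximal cliques of $\Delta_s$ are therefore the stars and tops of $\Gamma_{k-1}(V)$; the hypothesis $k\geq 3$ enters here, guaranteeing $1<k-1<n-1$ so that this dichotomy is genuine. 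Passing to $\Delta'_s$, an edge between adjacent stars is removed exactly when their unique common $k$-subspace $X_1+X_2$ is non-degenerate. A top of $\Gamma_{k-1}(V)$ is precisely some $\mathfrak{S}_A$, with all pairwise sums equal to $A$, hence a clique of $\Delta'_s$ iff $A$ is degenerate; a star of $\Gamma_{k-1}(V)$ — all $(k-1)$-subspaces through a fixed $(k-2)$-subspace $W$ — contains, for any prescribed $k$-subspace $\supset W$, a pair summing to it, so it is a $\Delta'_s$-clique only if every $k$-subspace through $W$ is degenerate. Since a maximal $\Delta_s$-clique that happens to be a $\Delta'_s$-clique is automatically maximal in $\Delta'_s$, the special sets are exactly the qualifying cliques, and (a) follows once the star-type cliques are excluded.

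The main obstacle, and the only place where the sharp bound $|\mathbb{F}|>n-k$ is used, consists of two existence statements about non-degenerate subspaces. For the adjacency characterization I must produce, for each non-degenerate $A_1\supset X_1$, a \emph{non-degenerate} partner in the pencil between $X_2$ and $A_1+X_2$: that pencil has $|\mathbb{F}|+1$ members, while at most $n-k+1$ coordinate hyperplanes $C_i$ can contain the fixed $(k-1)$-subspace $X_2$ (as $X_2\subseteq\bigcap C_i$ forces the count), and each such $C_i$ cuts out at most one degenerate member of the pencil; thus $|\mathbb{F}|+1>n-k+1$ leaves a non-degenerate partner. To exclude star-type cliques I must show every $(k-2)$-subspace $W$ lies in \emph{some} non-degenerate $k$-subspace, equivalently that $V/W$ is not covered by the images of the coordinate hyperplanes through it; this again follows from $|\mathbb{F}|>n-k$ via the standard fact that a vector space over a field with more than $m$ elements is not a union of $m$ proper hyperplanes. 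These two counting lemmas are the technical heart; everything else is incidence geometry.

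Part (2) is carried out by the same scheme with tops in place of stars and $\Gamma_{k+1}(V)$ in place of $\Gamma_{k-1}(V)$, the role of $k\geq 3$ now played by $k\leq n-3$ (which gives $1<k+1<n-1$); the tops $\mathcal{T}^c(Y)$ are genuine maximal cliques by Lemma~\ref{lemma-top}. A degenerate $k$-subspace $A$ is sent to $\mathfrak{T}_A=\{\,\mathcal{T}^c(Y): Y\supset A,\ \dim Y=k+1\,\}$, the star of $\Gamma_{k+1}(V)$ at $A$; two adjacent tops $\mathcal{T}^c(Y_1),\mathcal{T}^c(Y_2)$ are disjoint in $\Delta_k(V)$ iff their common $k$-subspace $Y_1\cap Y_2$ is degenerate, so $\mathfrak{T}_A$ is a $\Delta'_t$-clique iff $A$ is degenerate, and top-type cliques of $\Gamma_{k+1}(V)$ are excluded just as before. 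The more elaborate edge rule of $\Gamma_t$ matches adjacency of degenerate subspaces because a star $\mathcal{S}^c(X)$ meets every top through $A$ and every top through $B$ only if $X\subseteq A\cap B$ (the $(k+1)$-subspaces over $A$ intersect in $A$), forcing $\dim(A\cap B)\geq k-1$; conversely, for $X=A\cap B$ the required non-degenerate meets exist by the same pencil count under $|\mathbb{F}|>n-k$. Assembling the two parts — and using $n\geq 5$ so that every admissible $k$ with $1<k<n-1$ satisfies $k\geq 3$ or $k\leq n-3$ — proves the theorem.
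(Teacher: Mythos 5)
Your proposal is correct and follows essentially the same route as the paper: the adjacency lemmas identifying $\Delta_s$ with $\Gamma_{k-1}(V)$ and $\Delta_t$ with $\Delta_{k+1}(V)$, the pencil count (the paper's Lemma \ref{lemma-F}) as the sole place where $|{\mathbb F}|>n-k$ enters, the identification of special sets with degenerate $k$-dimensional subspaces, and the same adjacency bookkeeping for $\Gamma_s$ and $\Gamma_t$. The one quibble is your justification that every $(k-2)$-dimensional subspace $W$ lies in a non-degenerate $k$-dimensional subspace: up to $n-k+2$ coordinate hyperplanes can contain $W$, so the covering bound you invoke does not quite apply under $|{\mathbb F}|>n-k$ --- but the fact is immediate anyway (adjoin the all-ones vector to $W$ and extend), which is exactly how the paper handles this step.
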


Since the types of maximal cliques of $\Delta_k(V)$ can be determined in terms of adjacency, cardinalities and intersections
(Propositions \ref{prop-char-m-s} and \ref{prop-char-t-s}), Theorem \ref{theorem-s-t} provides two possibilities for recovering
of $\Gamma_k(V)$ from $\Delta_k(V)$ if $|{\mathbb F}|>n-k$ and $n\ge 5$. 
So,  Theorem \ref{theorem-1} follows from Theorem \ref{theorem-s-t} and Propositions \ref{prop-char-m-s} and \ref{prop-char-t-s} if $n\ge 5$. 

We assume  that $|{\mathbb F}|>n-k$.
In Section 5, we show that  
the graphs $\Delta_s$ and $\Delta_t$ are isomorphic to $\Gamma_{k-1}(V)$ and $\Delta_{k+1}(V)$, respectively.
In particular, $\Delta_s$ is a complete graph for $k=2$ and $\Delta_t$ is a complete graph for $k=n-2$. 
Therefore, there are no special sets of stars as well as special sets of tops if $n=4$.
In this case, the Grassmann graph will be recovered  via one class of maximal cliques of $\Delta'_t$ (Section 6).

\section{Proof of Theorem \ref{theorem-s-t}}
Throughout this section we suppose that $|{\mathbb F}|>n-k$. 
Recall that $1<k<n-1$ and, consequently, $n\ge 4$.

\subsection{Lemmas} 
To prove Theorem \ref{theorem-s-t} we will use the following lemmas.
\begin{lemma}\label{lemma-F}
For any non-degenerate $(k+1)$-dimensional subspace $Y$ and any $(k-1)$-dimensional subspace $X\subset Y$
there is a non-degenerate $k$-dimensional subspace $A$ satisfying $X\subset A\subset Y$. 
\end{lemma}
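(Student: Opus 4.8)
The plan is to consider the \emph{pencil} of all $k$-dimensional subspaces $A$ with $X\subset A\subset Y$ and to show that at most $n-k+1$ of them are degenerate, while the pencil has $|{\mathbb F}|+1$ members; since the hypothesis $|{\mathbb F}|>n-k$ forces $|{\mathbb F}|+1>n-k+1$, a non-degenerate member must remain. The subspaces $A$ satisfying $X\subset A\subset Y$ correspond bijectively to the $1$-dimensional subspaces of the $2$-dimensional quotient $Y/X$, so there are exactly $|{\mathbb F}|+1$ of them when ${\mathbb F}$ is finite and infinitely many when ${\mathbb F}$ is infinite.

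First I would isolate, for each $i$, the members of the pencil contained in $C_i$. Since $Y$ is non-degenerate, $Y\not\subset C_i$ for every $i$, hence $Y\cap C_i$ is a hyperplane of $Y$, that is, it is $k$-dimensional. A member $A$ of the pencil lies in $C_i$ if and only if $A\subset Y\cap C_i$, and as $\dim A=\dim (Y\cap C_i)=k$ this forces $A=Y\cap C_i$. Thus each $C_i$ removes at most one member of the pencil, and it removes one precisely when $Y\cap C_i\supseteq X$, i.e. when $X\subset C_i$.

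Next I would bound the number of indices $i$ with $X\subset C_i$. Setting $S=\{i:X\subset C_i\}$, we have $X\subset\bigcap_{i\in S}C_i$, and the latter intersection is $(n-|S|)$-dimensional because the coordinate functionals are linearly independent. From $\dim X=k-1\le n-|S|$ we obtain $|S|\le n-k+1$. Consequently the set of degenerate members of the pencil, which is contained in $\{Y\cap C_i:X\subset C_i\}$, has at most $n-k+1$ elements.

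Finally I would compare counts: the pencil has $|{\mathbb F}|+1$ members (infinitely many if ${\mathbb F}$ is infinite), of which at most $n-k+1$ are degenerate; since $|{\mathbb F}|>n-k$ yields $|{\mathbb F}|+1>n-k+1$, at least one member is non-degenerate, and this is the required $A$. The only delicate point is the dimension estimate bounding $|S|$ together with its exact matching to the hypothesis $|{\mathbb F}|>n-k$; the configuration of Example \ref{exmp1}, where $X=C_1\cap\dots\cap C_{n-k+1}$ realizes $|S|=n-k+1$, shows this bound is sharp and thus explains why the hypothesis cannot be weakened.
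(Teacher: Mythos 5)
Your argument is correct and is essentially the paper's proof in dual form: the paper chooses a $2$-dimensional complement $L$ of $X$ in $Y$ and counts the $|{\mathbb F}|+1$ one-dimensional subspaces of $L$, of which at most $|I|\le n-\dim X=n-k+1$ lie in some $C_i$ containing $X$, while you run the identical count directly on the pencil $\{A: X\subset A\subset Y\}\cong Y/X$. The key bound $n-k+1<|{\mathbb F}|+1$ and the observation that only indices with $X\subset C_i$ can disqualify a member are the same in both versions.
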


\begin{proof}
We take any $2$-dimensional subspace $L$ satisfying $L+X=Y$.
Let $I$ be the set of all $i\in \{1,\dots,n\}$ such that $X$ is contained in the coordinate hyperplane $C_i$.
If $i\in I$, then $C_i$ does not contain $L$
(otherwise $Y=L+X$ is contained in $C_i$ which contradicts the assumption that $Y$ is non-degenerate),
i.e. $C_i$ intersects $L$ in a $1$-dimensional subspace.
Note that $L$ contains precisely $|{\mathbb F}|+1$ distinct $1$-dimensional subspaces. 
Since
$$|I|\le n-\dim X=n-k+1<|{\mathbb F}|+1,$$
there is a $1$-dimensional subspace $P\subset L$ which is not contained in $C_i$ if $i\in I$.
The $k$-dimensional subspace $A=X+P$ is non-degenerate.
Indeed, if $A\subset C_i$, then $X\subset C_i$ which implies that $i\in I$
and, consequently, $P\not\subset C_i$, a contradiction.
\end{proof}

\begin{lemma}\label{lemma-top}
The set ${\mathcal T}^c(Y)$ is a top of $\Delta_k(V)$ for every non-degenerate $(k+1)$-dimensional subspace $Y$.
\end{lemma}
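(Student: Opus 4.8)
The plan is to split the argument according to whether $\mathbb{F}$ is infinite or finite. When $\mathbb{F}$ is infinite the assertion is precisely Proposition \ref{prop-top}(2), so nothing new is required. The whole burden therefore falls on the finite case $\mathbb{F}=\mathbb{F}_q$, where by Proposition \ref{prop-top}(1) it suffices to verify the numerical inequality $[k+1]_q-(q+1)>n$ under the standing hypothesis $q=|\mathbb{F}|>n-k$.

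Before turning to that inequality I would record a self-contained reason why it is exactly what forces ${\mathcal T}^c(Y)$ to be maximal, both to fix which implication of Proposition \ref{prop-top}(1) is being used and to keep the logic transparent. Since all $k$-subspaces of $Y$ are pairwise adjacent, ${\mathcal T}^c(Y)$ is a clique of $\Delta_k(V)$, and any maximal clique of $\Delta_k(V)$ containing it is a star or a top. The degenerate $k$-subspaces of $Y$ are exactly the distinct hyperplanes among $Y\cap C_1,\dots,Y\cap C_n$ (each is genuinely $k$-dimensional because $Y$ is non-degenerate), so there are at most $n$ of them. If ${\mathcal T}^c(Y)$ were contained in a star ${\mathcal S}^c(X)$, every non-degenerate $k$-subspace of $Y$ would contain $X$; but the $k$-subspaces of $Y$ \emph{not} containing $X$ number $[k+1]_q-(q+1)$, and each of these would then be degenerate, forcing $[k+1]_q-(q+1)\le n$. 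Ruling this out also shows ${\mathcal T}^c(Y)$ has more than one vertex; since two distinct tops of $\Gamma_k(V)$ meet in at most one vertex, ${\mathcal T}^c(Y)$ cannot sit inside a strictly larger top either, and hence it is a top. (Alternatively, for $k\ge 3$ one can bypass the counting in the star step by choosing a $(k-1)$-subspace $X'\subset Y$ with $X+X'=Y$ and applying Lemma \ref{lemma-F} to produce a non-degenerate $k$-subspace through $X'$ that cannot contain $X$.)

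It remains to prove $[k+1]_q-(q+1)>n$. Here I would use the identity $[k+1]_q-(q+1)=q^2+q^3+\dots+q^k=q^2[k-1]_q$, so that in particular $[k+1]_q-(q+1)\ge q^k$. Because $1<k<n-1$ gives $n-k\ge 2$, the hypothesis yields $q\ge n-k+1\ge 3$, and the same hypothesis rewrites as $n\le q+k-1$. Thus it is enough to check the elementary bound $q^k>q+k-1$ for $q\ge 2$ and $k\ge 2$, which holds since the left-hand side grows exponentially while the right-hand side grows linearly. Chaining these estimates gives $[k+1]_q-(q+1)\ge q^k>q+k-1\ge n$, which completes the finite case.

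I expect the main obstacle to be the inequality itself, and specifically the regime where $k$ is large and $n-k$ is small: there $q$ is guaranteed to be only $\ge n-k+1$, which may be as small as $3$, so the crude bound $q^2\ge(n-k+1)^2$ is far too weak to beat $n$. The point is that in this regime the factor $[k-1]_q$ must be retained in full, and it is precisely the top term $q^k$ arising from $q^2[k-1]_q$ that dominates the linear bound $q+k-1\ge n$ supplied by the hypothesis $|\mathbb{F}|>n-k$.
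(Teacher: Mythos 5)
Your proof is correct, and its skeleton matches the paper's: the infinite case is delegated to Proposition \ref{prop-top}(2), and the finite case is reduced to the inequality $[k+1]_q-(q+1)>n$ via Proposition \ref{prop-top}(1). The difference lies in how that inequality is handled. The paper verifies it numerically only for $k=2$ (where it reads $q^2>n$) and then, explicitly to avoid the computation for $k\ge 3$, switches to a geometric argument: assuming ${\mathcal T}^c(Y)$ lies in a star ${\mathcal S}^c(X)$, it picks a $(k-1)$-dimensional subspace $X'\subset Y$ with $\dim(X\cap X')=k-3$ and uses Lemma \ref{lemma-F} to produce a vertex of ${\mathcal T}^c(Y)$ containing $X'$, contradicting $\dim(X\cap X')\ge k-2$. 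You instead prove the inequality uniformly for all $k\ge 2$ by the chain $[k+1]_q-(q+1)=q^2+\dots+q^k\ge q^k>q+k-1\ge n$, the last step being exactly the hypothesis $q\ge n-k+1$; this is in effect the alternative you yourself mention parenthetically, so you carry both arguments. Your route is more self-contained and arguably simpler, while the paper's geometric detour buys nothing beyond avoiding an elementary estimate. A genuinely useful addition on your side is that you spell out why the inequality forces maximality (at most $n$ degenerate hyperplanes of $Y$, versus the $[k+1]_q-(q+1)$ hyperplanes of $Y$ avoiding any fixed $(k-1)$-dimensional subspace), whereas the paper leans on Proposition \ref{prop-top}(1), whose statement is phrased only as a necessary condition (``only when''); making the sufficiency direction explicit closes that small expository gap. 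The one thing to tighten in a final write-up is the appeal to ``exponential beats linear'', which should be replaced by the one-line verification $q^k-q=q(q^{k-1}-1)\ge 2(2^{k-1}-1)\ge k>k-1$ for $q,k\ge 2$.
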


\begin{proof}
This follows from  the statement (2) of Proposition \ref{prop-top} if ${\mathbb F}$ is infinite.
Suppose that ${\mathbb F}={\mathbb F}_q$. 
By the statement (1) of Proposition \ref{prop-top},
it is sufficient to show that the condition $|{\mathbb F}|=q>n-k$ guarantees that 
$$[k+1]_q-(q+1)>n.$$
If $k=2$, then
$$q^2>(n-2)^2\ge n$$
(since $n\ge 4$) which implies the required inequality. 
Instead of showing the same for $k\ge 3$ we use some simple geometric reasonings in this case.

By Lemma \ref{lemma-F}, ${\mathcal T}^c(Y)$ is non-empty.
If ${\mathcal T}^c(Y)$ is not a maximal clique of $\Delta_k(V)$, then it is contained in a maximal clique of $\Delta_k(V)$,
i.e. a star or a top. In the second case, ${\mathcal T}^c(Y)$ contains precisely one vertex. 
Therefore, there is a $(k-1)$-dimensional subspace $X$ contained in all $k$-dimensional subspaces belonging to ${\mathcal T}^c(Y)$. 
We take any $(k-1)$-dimensional subspace $X'\subset Y$ such that $$\dim (X\cap X')=k-3$$ 
(recall that $k\ge 3$ by our assumption). 
Lemma \ref{lemma-F} implies the existence of $A\in {\mathcal T}^c(Y)$ containing $X'$. Then $X$ and $X'$ both are contained in
the $k$-dimensional subspace $A$ which means that 
$$\dim (X\cap X')\ge k-2,$$ 
a contradiction.
\end{proof}

\begin{lemma}\label{lemma-S}
If ${\mathcal S}_i={\mathcal S}^c(X_i)$, $i\in \{1,2\}$ are distinct stars of $\Delta_k(V)$, then  the following conditions are equivalent:
\begin{enumerate}
\item[{\rm (1)}] $X_1$ and $X_2$ are adjacent, 
\item[{\rm (2)}] for every $A\in {\mathcal S}_i$, $i\in \{1,2\}$ there is $B\in {\mathcal S}_{3-i}$ adjacent to $A$. 
\end{enumerate}
\end{lemma}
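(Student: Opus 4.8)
The plan is to prove both implications, observing first that condition (2) is precisely the adjacency relation defining the graph $\Delta_s$, so the lemma identifies $\Delta_s$-adjacency of stars with adjacency of the underlying $(k-1)$-dimensional subspaces. The recurring tool will be the classical covering fact that a vector space over ${\mathbb F}$ is not a union of $m$ proper subspaces whenever $m\le|{\mathbb F}|$ (and not a union of finitely many proper subspaces when ${\mathbb F}$ is infinite). Since $|{\mathbb F}|>n-k\ge 2$ (so $|{\mathbb F}|\ge 3$) and each $(k-1)$-dimensional subspace lies in at most $n-k+1$ coordinate hyperplanes, this gives enough room to select non-degenerate subspaces avoiding prescribed coordinate hyperplanes.

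For $(1)\Rightarrow(2)$, suppose $X_1,X_2$ are adjacent and fix $A\in{\mathcal S}_1$; by the symmetry of (2) in the two indices it suffices to produce $B\in{\mathcal S}_2$ adjacent to $A$. If $X_2\not\subseteq A$, then $Y=A+X_2$ is $(k+1)$-dimensional and non-degenerate (it contains the non-degenerate $A$), so Lemma \ref{lemma-F} yields a non-degenerate $B$ with $X_2\subset B\subset Y$; since $B\supseteq X_2\not\subseteq A$ we get $B\ne A$, and two distinct $k$-dimensional subspaces of the $(k+1)$-dimensional $Y$ are adjacent. The only remaining case is $X_2\subseteq A$, which forces $A=X_1+X_2$. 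Here I will look for $B=X_2+\langle w\rangle$ and must choose $w\notin A$ with $w\notin C_i$ for every coordinate hyperplane $C_i\supseteq X_2$; the first condition makes $B$ distinct from $A$ (hence adjacent to it, as both contain the $(k-1)$-dimensional $X_2$), and the second makes $B$ non-degenerate.

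For $(2)\Rightarrow(1)$ I argue by contraposition: assume $X_1,X_2$ are distinct and non-adjacent, so $\dim(X_1+X_2)\ge k+1$, and exhibit $A\in{\mathcal S}_1$ having no adjacent partner in ${\mathcal S}_2$. If $\dim(X_1+X_2)\ge k+2$, then any $A\supseteq X_1$ and $B\supseteq X_2$ satisfy $\dim(A+B)\ge k+2$, so no such pair is adjacent and (2) fails for every $A$. If $\dim(X_1+X_2)=k+1$, put $Y=X_1+X_2$ (a proper subspace, since $k+1<n$); an adjacent pair $A\supseteq X_1$, $B\supseteq X_2$ would force $A+B=Y$ and hence $A\subseteq Y$, so it is enough to produce a non-degenerate $A=X_1+\langle w\rangle$ with $w\notin Y$, i.e. to choose $w$ outside $Y$ and outside every coordinate hyperplane containing $X_1$.

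Each of the three non-trivial cases thus reduces to finding a vector outside a union of at most $n-k+2$ proper subspaces, namely one distinguished subspace together with at most $n-k+1$ coordinate hyperplanes. When at most $n-k$ coordinate hyperplanes occur, the total is at most $n-k+1\le|{\mathbb F}|$ and the covering fact applies directly. The genuine obstacle is the extremal case $|{\mathbb F}|=n-k+1$ together with exactly $n-k+1$ coordinate hyperplanes, where the naive count falls one short. In that case the relevant subspace ($X_2$, respectively $X_1$) is forced to equal the coordinate subspace $\bigcap_{i\in I}C_i$, and I will pass to the quotient by it: the quotient has dimension $n-k+1\ge 3$, the $n-k+1$ coordinate hyperplanes become hyperplanes whose complement consists of the vectors with all coordinates nonzero, and these span the quotient (using $|{\mathbb F}|\ge 3$). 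Since a spanning set of a space of dimension $\ge 3$ cannot be contained in the one-dimensional image $\langle\bar A\rangle$ (respectively the two-dimensional image $\bar Y$), the required $w$ exists. This extremal case is exactly where the hypothesis $|{\mathbb F}|>n-k$ is indispensable.
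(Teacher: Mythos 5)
Your proof is correct and follows essentially the same route as the paper's: the forward implication rests on Lemma \ref{lemma-F} applied to $A+X_{3-i}$, and the backward implication reduces to producing a member of ${\mathcal S}_1$ not contained in the $(k+1)$-dimensional subspace $X_1+X_2$. The only real difference is that where the paper simply invokes Proposition \ref{prop-star} (in the forward direction, $X_2\subseteq A$ just means $A\in{\mathcal S}_2$, so any second vertex of that star works; in the backward direction, a star is not contained in any top, so some $A\in{\mathcal S}_1$ lies outside $X_1+X_2$), you re-derive these existence statements by hand via the subspace-covering lemma together with a quotient argument in the extremal case $|{\mathbb F}|=n-k+1$ --- more work than necessary, but valid, and it correctly pinpoints where the hypothesis $|{\mathbb F}|>n-k$ enters.
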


\begin{proof}
$(1)\Rightarrow(2)$.
If $A\in {\mathcal S}_i\cap {\mathcal S}_{3-i}$, then (2) is obvious.
Let $A\in {\mathcal S}_i\setminus {\mathcal S}_{3-i}$. 
Since $X_1$ and $X_2$ are adjacent,
the  subspace $A+X_{3-i}$ is $(k+1)$-dimensional. 
It also is non-degenerate (as containing the non-degenerate subspace $A$). By Lemma \ref{lemma-F}, 
there is a non-degenerate $k$-dimensional subspace $B\subset A+X_{3-i}$ containing $X_{3-i}$. 
Then $B\in {\mathcal S}_{3-i}$ is adjacent to $A$.

$(2)\Rightarrow(1)$. 
This is trivial if $k=2$ (any two distinct $1$-dimensional subspaces are adjacent).
Let $k\ge 3$. Show that (2) fails if  $X_1$ and $X_2$ are not adjacent. 
Suppose that 
$$\dim(X_1\cap X_2)\le k-3$$
and (2) is satisfied. 
Then there are adjacent $A\in {\mathcal S}_1$ and $B\in {\mathcal S}_2$ which means that $X_1$ and $X_2$ are 
contained in the $(k+1)$-dimensional subspace $A+B$. 
Therefore, 
$$\dim(X_1\cap X_2)=k-3$$
which implies that
$$\dim(X_1+X_2)=k+1.$$
Since ${\mathcal S}_i$ is a star of $\Delta_k(V)$, there is $A\in {\mathcal S}_i$ which is not contained in 
the $(k+1)$-dimensional subspace $X_1+X_2$. 
By (2), $A$ is adjacent to a certain $B\in {\mathcal S}_{3-i}$.
The $(k+1)$-dimensional subspace $A+B$ contains $X_1,X_2$ and, 
consequently, coincides with $X_1+X_2$. This contradicts the fact that $A$ is not contained in $X_1+X_2$. 
\end{proof}

\begin{lemma}\label{lemma-T}
If ${\mathcal T}_i={\mathcal T}^c(Y_i)$, $i\in \{1,2\}$ are distinct tops of $\Delta_k(V)$, then the following conditions are equivalent:
\begin{enumerate}
\item[{\rm (1)}] $Y_1$ and $Y_2$ are adjacent,
\item[{\rm (2)}] for every $A\in {\mathcal T}_i$, $i\in \{1,2\}$ there is $B\in {\mathcal T}_{3-i}$ adjacent to $A$. 
\end{enumerate}
\end{lemma}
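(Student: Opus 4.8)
The plan is to prove this statement as the exact dual of Lemma~\ref{lemma-S}, interchanging the roles of sum and intersection and of stars and tops. Throughout, write $Z=Y_1\cap Y_2$ and recall that the $(k+1)$-dimensional subspaces $Y_1,Y_2$ are adjacent if and only if $\dim Z=k$, while the vertices of $\mathcal{T}_i$ are the non-degenerate $k$-dimensional subspaces contained in $Y_i$. The central dimension bookkeeping is that for any $A\in\mathcal{T}_1$ and $B\in\mathcal{T}_2$ one has $A\cap B\subseteq Y_1\cap Y_2=Z$, which is the intersection-theoretic analogue of the containment $A+B\supseteq X_1+X_2$ used in Lemma~\ref{lemma-S}.

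For the implication $(1)\Rightarrow(2)$, assume $\dim Z=k$ and fix $A\in\mathcal{T}_i$. If $A=Z$, then $A$ is a non-degenerate subspace lying in both $Y_i$ and $Y_{3-i}$, hence $A\in\mathcal{T}_{3-i}$; since $\mathcal{T}_{3-i}$ is a maximal clique and therefore contains at least three vertices, any other vertex $B\in\mathcal{T}_{3-i}$ is adjacent to $A$. Otherwise $A\neq Z$, and because both are $k$-dimensional inside the $(k+1)$-dimensional $Y_i$, the subspace $W:=A\cap Y_{3-i}=A\cap Z$ is $(k-1)$-dimensional and contained in $Y_{3-i}$. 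I would then apply Lemma~\ref{lemma-F} to the non-degenerate subspace $Y_{3-i}$ and the $(k-1)$-dimensional subspace $W\subset Y_{3-i}$ to obtain a non-degenerate $B$ with $W\subset B\subset Y_{3-i}$; then $A\cap B=W$ is $(k-1)$-dimensional, so $B\in\mathcal{T}_{3-i}$ is adjacent to $A$.

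For $(2)\Rightarrow(1)$ I would argue by contraposition, assuming $Y_1,Y_2$ are not adjacent, i.e.\ $\dim Z\le k-1$, and showing that (2) fails. The key observation is that if $A\in\mathcal{T}_1$ and $B\in\mathcal{T}_2$ are adjacent, then $k-1=\dim(A\cap B)\le\dim Z\le k-1$, forcing $\dim Z=k-1$ and $A\cap B=Z\subseteq A$. Consequently, if $\dim Z\le k-2$ no adjacent pair can exist at all, so any vertex $A\in\mathcal{T}_1$ witnesses the failure of (2). In the remaining case $\dim Z=k-1$, every vertex $A\in\mathcal{T}_1$ admitting an adjacent $B\in\mathcal{T}_2$ must contain $Z$; hence it suffices to produce a vertex $A\in\mathcal{T}_1$ with $Z\not\subseteq A$.

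The one genuinely delicate step is this last existence claim, and I expect it to be the main obstacle because a naive construction via Lemma~\ref{lemma-F} would need to avoid one extra coordinate direction and would only succeed under the stronger hypothesis $|\mathbb{F}|>n-k+1$. I would instead argue by maximality: if every vertex of $\mathcal{T}^c(Y_1)$ contained the $(k-1)$-dimensional subspace $Z$, then $\mathcal{T}^c(Y_1)\subseteq\mathcal{S}^c(Z)$; since $|\mathbb{F}|>n-k\ge 2$ gives $|\mathbb{F}|\ge 3$, Proposition~\ref{prop-star} shows $\mathcal{S}^c(Z)$ is a star, and the maximality of the top $\mathcal{T}^c(Y_1)$ would force $\mathcal{T}^c(Y_1)=\mathcal{S}^c(Z)$, contradicting the fact that a star is not a top. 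This yields the required $A$ and completes the contrapositive, thereby establishing the equivalence under the hypothesis $|\mathbb{F}|>n-k$.
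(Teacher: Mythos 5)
Your proof is correct and follows essentially the same route as the paper's: Lemma \ref{lemma-F} applied to $A\cap(Y_1\cap Y_2)$ for $(1)\Rightarrow(2)$, and for $(2)\Rightarrow(1)$ the observation that non-adjacency forces $\dim(Y_1\cap Y_2)\le k-1$ together with a vertex of ${\mathcal T}_i$ not containing $Y_1\cap Y_2$. The only difference is that you spell out, via maximality of the top and Proposition \ref{prop-star}, the step the paper leaves implicit in ``Since ${\mathcal T}_i$ is a top \dots there is $A\in{\mathcal T}_i$ which does not contain $Y_1\cap Y_2$,'' which is a sound and welcome elaboration.
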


\begin{proof}
$(1)\Rightarrow(2)$.
If $A\in {\mathcal T}_i\cap {\mathcal T}_{3-i}$, then (2) is obvious.
Let $A\in {\mathcal T}_i\setminus {\mathcal T}_{3-i}$. 
Then $A$ and $Y_1\cap Y_2$ are distinct $k$-dimensional subspaces of $Y_i$ and
their intersection is a $(k-1)$-dimensional subspace $X'$.
Since $X'\subset Y_{3-i}$, Lemma \ref{lemma-F} implies the existence of  a non-degenerate $k$-dimensional subspace $B\subset Y_{3-i}$
containing $X'$.
So, $B\in {\mathcal T}_{3-i}$ is adjacent to $A$.

$(2)\Rightarrow(1)$. 
This is trivial if $k=n-2$ (any two distinct $(n-1)$-dimensional subspaces are adjacent). 
Let $k\le n-3$. 
Show that (2) fails if  $Y_1$ and $Y_2$ are not adjacent. 
Suppose that  
$$\dim(Y_1\cap Y_2)<k$$ and (2) is satisfied. 
Then there are adjacent $A\in {\mathcal T}_1$ and $B\in {\mathcal T}_2$.
Their intersection  is a $(k-1)$-dimensional subspace contained in $Y_1\cap Y_2$ which implies that
$$\dim(Y_1\cap Y_2)=k-1.$$
Since ${\mathcal T}_i$ is a top of $\Delta_k(V)$, there is $A\in {\mathcal T}_i$ which does not contain $Y_1\cap Y_2$.
Then $A$ intersects $Y_{3-i}$ in a subspace of dimension less than $k-1$ and, consequently, 
there is no $B\in {\mathcal T}_{3-i}$ adjacent to $A$,  a contradiction. 
\end{proof}

\subsection{Recovering via stars}
Since $1<k<n-1$, we have $|{\mathbb F}|>n-k\ge 2$ and Proposition \ref{prop-star} shows that 
${\mathcal S}^c(X)$ is a star of $\Delta_k(V)$ for every $(k-1)$-dimensional subspace $X$.

Recall that the vertex set of the graphs $\Delta_s$ and $\Delta'_s$ is the set of stars of  $\Delta_k(V)$. 
Two distinct stars ${\mathcal S}_1$ and ${\mathcal S}_2$ are adjacent vertices of $\Delta_s$ if and only if 
for each $i\in \{1,2\}$ every vertex of ${\mathcal S}_i$ is adjacent to at least one vertex of ${\mathcal S}_{3-i}$. By Lemma \ref{lemma-S},
this is equivalent to the fact that the corresponding $(k-1)$-dimensional subspaces are adjacent.
So, the graph $\Delta_s$ is isomorphic to $\Gamma_{k-1}(V)$; in particular, $\Delta_s$  is a complete graph if $k=2$. 

Two distinct stars of  $\Delta_k(V)$ are adjacent vertices of  $\Delta'_s$ 
if and only if they are adjacent in $\Delta_s$ and their intersection is empty or, equivalently,
the sum of the corresponding $(k-1)$-dimensional subspaces is a degenerate $k$-dimensional subspace. 
The latter implies that these stars are non-maximal. 
Therefore, every maximal star of $\Delta_k(V)$ is an isolated vertex of $\Delta'_s$
and cliques of $\Delta'_s$ containing more than one vertex are formed by non-maximal stars.

For a degenerate $k$-dimensional subspace $X$ denote by ${\mathfrak S}_X$
the set of all stars of $\Delta_k(V)$ such that the corresponding $(k-1)$-dimensional subspaces are 
contained in $X$, in other words, form the top of $\Gamma_{k-1}(V)$ defined by $X$. 
This is a maximal clique in both $\Delta_s$ and $\Delta'_s$, i.e. a special set of stars, if $k\ge 3$.
For $k=2$ this clique is not maximal in $\Delta_s$.

\begin{lemma}\label{lemma-sS}
If $k\ge 3$ and ${\mathfrak S}$ is a special set of stars, then ${\mathfrak S}={\mathfrak S}_X$ for a certain degenerate $k$-dimensional subspace $X$.
\end{lemma}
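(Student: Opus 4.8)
The plan is to show that every special set of stars arises from a degenerate $k$-dimensional subspace via the construction ${\mathfrak S}_X$. Recall the isomorphism $\Delta_s\cong\Gamma_{k-1}(V)$ established just above: a star ${\mathcal S}^c(X_0)$ of $\Delta_k(V)$ corresponds to the $(k-1)$-dimensional subspace $X_0$, and adjacency in $\Delta_s$ matches adjacency in $\Gamma_{k-1}(V)$. A star is non-maximal exactly when its $(k-1)$-dimensional subspace is degenerate, and two non-maximal stars are adjacent in $\Delta'_s$ precisely when the corresponding $(k-1)$-spaces are adjacent \emph{and} their sum is a degenerate $k$-dimensional subspace. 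So a special set ${\mathfrak S}$ translates into a set $\mathcal{F}$ of degenerate $(k-1)$-dimensional subspaces that is a maximal clique in $\Gamma_{k-1}(V)$ and simultaneously a maximal clique under the refined relation (pairwise adjacent with degenerate pairwise sum). The goal becomes: classify such $\mathcal{F}$.

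\textbf{Reducing to maximal cliques of the Grassmann graph.} First I would use that any clique in $\Delta'_s$ is in particular a clique in $\Delta_s\cong\Gamma_{k-1}(V)$, so $\mathcal{F}$ is contained in a maximal clique of $\Gamma_{k-1}(V)$. By the classification of maximal cliques recalled in Section~3, such a maximal clique is either a star ${\mathcal S}(Z)$ (all $(k-1)$-spaces containing a fixed $(k-2)$-space $Z$) or a top ${\mathcal T}(X)$ (all $(k-1)$-spaces inside a fixed $k$-space $X$). I must rule out the star case and pin down the top case. In the top case ${\mathcal T}(X)$, the pairwise sum of any two distinct members is exactly $X$; hence the refined "degenerate sum" condition holds for all pairs simultaneously iff $X$ itself is degenerate, and in that case every member of ${\mathcal T}(X)$ is automatically degenerate (contained in the degenerate $X$). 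This is precisely ${\mathfrak S}_X$. In the star case ${\mathcal S}(Z)$, two members $X_0,X_1$ sum to a $k$-space lying in the pencil through $Z$; I would show that one cannot force \emph{all} pairwise sums to be degenerate while keeping the set a maximal refined-clique, because the degenerate $k$-spaces containing $Z$ do not all contain a common $(k-1)$-space, so the refined-clique structure forces the configuration into a single top rather than a star.

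\textbf{The maximality argument.} The crux is to show that a special ${\mathfrak S}$ cannot be strictly smaller than some ${\mathfrak S}_X$ and cannot be of star type. For containment: given that $\mathcal{F}$ sits inside a maximal $\Gamma_{k-1}(V)$-clique, I would argue that if $\mathcal{F}\subseteq{\mathcal T}(X)$ with $X$ degenerate, then $\mathcal{F}$ being a \emph{maximal} clique of $\Delta'_s$ forces $\mathcal{F}={\mathcal T}(X)={\mathfrak S}_X$, since every additional member of ${\mathcal T}(X)$ is adjacent to all of $\mathcal{F}$ with degenerate sum $X$, so could be added unless already present. For excluding the star type ${\mathcal S}(Z)$: here I expect to use the hypothesis $|{\mathbb F}|>n-k$ together with a counting/dimension argument. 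Specifically, within ${\mathcal S}(Z)$ the pairwise sums range over $k$-dimensional subspaces containing $Z$, and I would show that the degeneracy constraint on all pairs, combined with there being enough non-degenerate configurations available (guaranteed by the field size, as in Lemmas~\ref{lemma-F} and~\ref{lemma-S}), prevents $\mathcal{F}$ from being maximal unless it collapses into a top.

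\textbf{Anticipated obstacle.} The main difficulty will be the star-type exclusion: I must verify that a refined-clique of degenerate $(k-1)$-spaces all containing a fixed $(k-2)$-space $Z$, with all pairwise sums degenerate, either fails to be maximal or is really a top in disguise. This requires controlling which $k$-dimensional subspaces containing $Z$ are degenerate and showing the "degenerate pairwise sum" relation is not transitive enough to sustain a large star-shaped family; the field-size bound $|{\mathbb F}|>n-k$ should be exactly what supplies a non-degenerate subspace escaping any given coordinate configuration, breaking putative maximality. I would handle this by picking a suitable non-degenerate $(k-1)$-space adjacent to the whole family (as in the proof of Proposition~\ref{prop-char-t-s}) and deriving a contradiction with the definition of $\Delta'_s$-adjacency.
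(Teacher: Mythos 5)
Your overall strategy is the paper's: pass through the isomorphism $\Delta_s\cong\Gamma_{k-1}(V)$, note that a special set corresponds to a maximal clique ${\mathcal X}$ of $\Gamma_{k-1}(V)$ consisting of degenerate $(k-1)$-dimensional subspaces, invoke the star/top classification of maximal cliques (this is where $k\ge 3$ enters), and settle the top case by observing that all pairwise sums equal the defining $k$-dimensional subspace $X$, which must therefore be degenerate, giving ${\mathfrak S}={\mathfrak S}_X$. All of that is correct and matches the paper; also, since ${\mathcal X}$ is itself a \emph{maximal} clique of $\Gamma_{k-1}(V)$, your worry about ${\mathfrak S}$ being a proper subset of some ${\mathfrak S}_X$ does not arise.

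The one place where your plan goes astray is the star-case exclusion, which you leave as the "anticipated obstacle". The first justification you offer (that the degenerate $k$-spaces containing $Z$ share no common $(k-1)$-space, so the configuration is "forced into a top") is not an argument: for $k\ge 3$ a star of $\Gamma_{k-1}(V)$ is never a top, so nothing can collapse into one. The counting/dimension argument you anticipate is likewise not the right tool. The actual obstruction is the one you only name in your final sentence, and it is a one-liner: if ${\mathcal X}={\mathcal S}(Z)$ for a $(k-2)$-dimensional $Z$, then $Z$ is degenerate (being contained in degenerate subspaces), so for any non-degenerate $1$-dimensional $P$ the $(k-1)$-dimensional subspace $Z+P$ is non-degenerate and contains $Z$; hence the \emph{maximal} star ${\mathcal S}(Z+P)$ is adjacent in $\Delta_s$ to every member of ${\mathfrak S}$ yet cannot belong to ${\mathfrak S}$, whose members are non-maximal stars. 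The contradiction is therefore with the maximality of ${\mathfrak S}$ as a clique of $\Delta_s$ --- not, as you write, "with the definition of $\Delta'_s$-adjacency": ${\mathcal S}(Z+P)$ is an isolated vertex of $\Delta'_s$, so no contradiction is available there, and this is precisely why the definition of a special set demands maximality in \emph{both} $\Delta_s$ and $\Delta'_s$. No use of $|{\mathbb F}|>n-k$ is needed at this step beyond what is already built into the isomorphism $\Delta_s\cong\Gamma_{k-1}(V)$. With this correction your proof closes.
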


\begin{proof}
Since ${\mathfrak S}$ is a maximal clique of $\Delta_s$,
the $(k-1)$-dimensional subspaces corresponding to stars from ${\mathfrak S}$ form a maximal clique ${\mathcal X}$ in $\Gamma_{k-1}(V)$.
Then ${\mathfrak S}$ is a clique of $\Delta'_s$ containing more than one vertex and, consequently,
it is formed by non-maximal stars which means that ${\mathcal X}$ consists of degenerate subspaces. 
Since $k\ge 3$, ${\mathcal X}$ is a star or a top of $\Gamma_{k-1}(V)$.

Suppose that ${\mathcal X}$ is the top of $\Gamma_{k-1}(V)$ defined by a $k$-dimensional subspace $X$.
For any distinct $A,B\in {\mathcal X}$ the corresponding stars of $\Delta_k(V)$ are adjacent in $\Delta'_s$
which implies that $A+B=X$ is degenerate.
Therefore, ${\mathfrak S}={\mathfrak S}_X$. 

Suppose that ${\mathcal X}$ is the star of $\Gamma_{k-1}(V)$ defined by a $(k-2)$-dimensional subspace $Z$.
Then $Z$ is degenerate (as contained in degenerate subspaces). 
If $P$ is a non-degenerate $1$-dimensional subspace, then $P\not\subset Z$ and the $k$-dimensional subspace $Z+P$ is non-degenerate. 
The maximal star ${\mathcal S}(Z+P)$ does not belong to ${\mathfrak S}$, but it is adjacent to all stars from ${\mathfrak S}$ in $\Delta_s$.
So, the clique ${\mathfrak S}$ is not maximal in $\Delta_s$.
\end{proof}

As above, we suppose that $k\ge 3$.
Let $f$ be the bijection between the vertex sets of $\Gamma_s$ and $\Gamma_k(V)$ 
sending every non-degenerate $k$-dimensional subspace to itself and every special set  ${\mathfrak S}_X$ to $X$.
It is clear that the restriction of $f$ to the set of all non-degenerate $k$-dimensional subspaces is adjacency preserving in both directions.
A non-degenerate $k$-dimensional subspace $A$ and a special set ${\mathfrak S}_B$ are adjacent  in $\Gamma_s$ if and only if $A$ belongs to a certain star from ${\mathfrak S}_B$.
Two distinct special sets ${\mathfrak S}_A$ and ${\mathfrak S}_B$ are adjacent in $\Gamma_s$ if and only if there is a
star belonging to both ${\mathfrak S}_A$ and ${\mathfrak S}_B$.
Each of these conditions is equivalent to the existence of a $(k-1)$-dimensional subspace contained in both $A,B$
which holds if and only if $A$ and $B$ are adjacent. 
So, $f$ is a graph isomorphism.

\subsection{Recovering via tops}
By Lemma \ref{lemma-top}, ${\mathcal T}^c(Y)$ is a top of $\Delta_k(V)$ for every non-degenerate $(k+1)$-dimensional subspace $Y$.

The vertex set of the graphs $\Delta_t$ and $\Delta'_t$ is the set of tops of  $\Delta_k(V)$.
Two distinct tops ${\mathcal T}_1$ and ${\mathcal T}_2$ are adjacent vertices of $\Delta_t$ if and only if 
for each $i\in \{1,2\}$ every vertex of ${\mathcal T}_i$ is adjacent to at least one vertex of ${\mathcal T}_{3-i}$. By Lemma \ref{lemma-T},
this is equivalent to the fact that the corresponding $(k+1)$-dimensional subspaces are adjacent. 
Therefore, the graph $\Delta_t$ is isomorphic to $\Delta_{k+1}(V)$; in particular, $\Delta_t$  is a complete graph if $k=n-2$. 

Two distinct tops of  $\Delta_k(V)$ are adjacent vertices of $\Delta'_t$ 
if and only if they are adjacent in $\Delta_t$ and their intersection is empty or, equivalently, 
the intersection of the corresponding $(k+1)$-dimensional subspaces is a degenerate $k$-dimensional subspace. 

For a degenerate $k$-dimensional subspace $X$ denote by ${\mathfrak T}_X$ 
the set of all tops of $\Delta_k(V)$ such that the corresponding non-degenerate $(k+1)$-dimensional subspaces contain $X$.
If $k\le n-3$, then  $|{\mathbb F}| >n-k\ge 3$ and
Proposition \ref{prop-star} shows that the non-degenerate $(k+1)$-dimensional subspaces containing $X$ form a star of $\Delta_{k+1}(V)$
which implies that ${\mathfrak T}_X$ is a maximal clique in both $\Delta_t$ and $\Delta'_t$, i.e. a special set of tops.
For $k=n-2$ this clique is not maximal in $\Delta_t$.

\begin{lemma}\label{lemma-sT}
If $k\le n-3$ and  ${\mathfrak T}$ is a special set of tops,  then ${\mathfrak T}={\mathfrak T}_X$ for a certain degenerate $k$-dimensional subspace $X$.
\end{lemma}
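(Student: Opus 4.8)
The plan is to transport the problem to the graph $\Delta_{k+1}(V)$ via the isomorphism $\Delta_t\cong\Delta_{k+1}(V)$ established above, which sends each top $\mathcal{T}^c(Y)$ to the non-degenerate $(k+1)$-dimensional subspace $Y$. Since $\mathfrak{T}$ is a maximal clique of $\Delta_t$, its image is a maximal clique of $\Delta_{k+1}(V)$; because $k\le n-3$ gives $1<k+1<n-1$, the results of this section (Proposition \ref{prop-star} and Lemma \ref{lemma-top} read with $k+1$ in place of $k$) show that the maximal cliques of $\Delta_{k+1}(V)$ are exactly its stars and tops. Hence $\mathfrak{T}$ consists either of all non-degenerate $(k+1)$-dimensional subspaces containing a fixed $k$-dimensional subspace $X$ (a star $\mathcal{S}^c(X)$ of $\Delta_{k+1}(V)$), or of all non-degenerate $(k+1)$-dimensional subspaces contained in a fixed non-degenerate $(k+2)$-dimensional subspace $W$ (a top $\mathcal{T}^c(W)$ of $\Delta_{k+1}(V)$). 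The goal is to exclude the top case and, in the star case, to force $X$ to be degenerate, so that $\mathfrak{T}=\mathfrak{T}_X$.

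The decisive extra information is that $\mathfrak{T}$ is also a clique of $\Delta'_t$: as recorded above, two distinct tops are adjacent in $\Delta'_t$ precisely when the intersection of the corresponding $(k+1)$-dimensional subspaces is a degenerate $k$-dimensional subspace. First I would treat the star case. For distinct $Y_1,Y_2\supset X$ in $\mathcal{S}^c(X)$ one has $Y_1\cap Y_2=X$, so the requirement that $Y_1$ and $Y_2$ be adjacent in $\Delta'_t$ forces $X$ to be degenerate; this is exactly the description of $\mathfrak{T}_X$. A maximal clique of $\Delta_{k+1}(V)$ has at least three vertices, so such pairs genuinely occur, which is why $X$ cannot be non-degenerate.

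The main obstacle is excluding the top case, and here I would show that $\mathcal{T}^c(W)$ cannot even be a clique of $\Delta'_t$. Iterating the argument of Lemma \ref{lemma-F} --- first to extend a $k$-dimensional subspace of $W$ to a non-degenerate $(k+1)$-dimensional $Y\subset W$, then to produce a non-degenerate $k$-dimensional $U\subset Y$ --- yields a non-degenerate $k$-dimensional subspace $U\subset W$; both applications are legitimate because $|\mathbb{F}|>n-k$. Any hyperplane $Y$ of $W$ with $U\subset Y$ is then automatically non-degenerate, since $U\not\subset C_i$ for every $i$ forces $Y\not\subset C_i$. As $W/U$ is two-dimensional, $W$ contains two distinct hyperplanes $Y_1\ne Y_2$ through $U$; both are non-degenerate and $Y_1\cap Y_2=U$ is non-degenerate. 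Thus the tops $\mathcal{T}^c(Y_1),\mathcal{T}^c(Y_2)$ belong to $\mathfrak{T}$ but share the vertex $U$, so they are not adjacent in $\Delta'_t$, contradicting that $\mathfrak{T}$ is a clique of $\Delta'_t$. Hence the top case does not occur, and $\mathfrak{T}=\mathfrak{T}_X$ with $X$ degenerate, as claimed. I expect the only delicate point to be the existence of the non-degenerate $k$-dimensional $U\subset W$, which is precisely where the hypothesis $|\mathbb{F}|>n-k$ enters through Lemma \ref{lemma-F}.
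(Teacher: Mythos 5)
Your proposal is correct and follows essentially the same route as the paper: transport $\mathfrak{T}$ through the isomorphism $\Delta_t\cong\Delta_{k+1}(V)$, use $k\le n-3$ to split into the star and top cases of $\Delta_{k+1}(V)$, read off degeneracy of $X$ from the $\Delta'_t$-adjacency condition in the star case, and kill the top case by producing two non-degenerate hyperplanes of the $(k+2)$-dimensional subspace meeting in a non-degenerate $k$-dimensional subspace. The only cosmetic difference is that the paper obtains its contradiction from non-maximality of $\mathfrak{T}$ in $\Delta_t$ (taking the non-degenerate $A$ directly from a non-empty top $\mathcal{T}^c(Y)$ rather than via a double application of Lemma \ref{lemma-F}), while you contradict $\mathfrak{T}$ being a clique of $\Delta'_t$; both contradictions come from the same configuration.
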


\begin{proof}
Since ${\mathfrak T}$ is a maximal clique of $\Delta_t$,
the non-degenerate $(k+1)$-dimensional subspaces corresponding to tops from ${\mathfrak T}$
form a maximal clique ${\mathcal X}$ in $\Delta_{k+1}(V)$. 
This is a star or a top of $\Delta_{k+1}(V)$ as $k\le n-3$.

Suppose that ${\mathcal X}$ is the star of $\Delta_{k+1}(V)$ defined by a $k$-dimensional subspace $X$. 
For any distinct $A,B\in {\mathcal X}$ the corresponding tops of $\Delta_k(V)$ are adjacent in $\Delta'_t$
which implies that $A\cap B=X$ is degenerate.
Therefore, ${\mathfrak T}={\mathfrak T}_X$. 

Suppose that ${\mathcal X}$ is the top of $\Delta_{k+1}(V)$ defined by a non-degenerate $(k+2)$-dimen\-sional subspace $Z$. 
Let ${\mathcal T}^c(Y)$ be a top belonging to ${\mathfrak T}$ and let $A\in {\mathcal T}^c(Y)$.
We take any $(k+1)$-dimensional subspace  $Y'$ satisfying $A\subset Y'\subset Z$ and distinct from $Y$. 
Then $Y'$ is non-degenerate (as containing the non-degenerate subspace $A$).
Since $Y\cap Y'=A$, the tops ${\mathcal T}^c(Y)$ and ${\mathcal T}^c(Y')$ are not adjacent in $\Delta'_t$
and, consequently, ${\mathcal T}^c(Y')$ does not belong to ${\mathfrak T}$.
On the other hand, $Y'\subset Z$ and, consequently,  ${\mathcal T}^c(Y')$ is adjacent to all tops from ${\mathcal T}$ in $\Delta_t$. 
So, the clique ${\mathcal T}$ is not maximal in $\Delta_t$.
\end{proof}

As above, we suppose that $k\le n-3$. 
Let $f$ be the bijection between the vertex sets of $\Gamma_t$ and $\Gamma_k(V)$ 
sending every non-degenerate $k$-dimensional subspace to itself and every special set  ${\mathfrak T}_X$ to $X$.
The restriction of $f$ to the set of all non-degenerate $k$-dimensional subspaces is adjacency preserving in both directions.
A non-degenerate $k$-dimensional subspace $A$ and a special set ${\mathfrak T}_B$ are adjacent  in $\Gamma_t$ if and only if $A$ belongs to a certain top 
from ${\mathfrak T}_B$. This is equivalent to the existence of a $(k+1)$-dimensional subspace containing both $A,B$ 
which holds if and only if $A$ and $B$ are adjacent.

We need to show that two distinct special sets ${\mathfrak T}_A$ and ${\mathfrak T}_B$ are adjacent in $\Gamma_t$ if and only if 
$A$ and $B$ are adjacent. 

If ${\mathfrak T}_A$ and ${\mathfrak T}_B$ are adjacent in $\Gamma_t$, then
there is a star ${\mathcal S}^c(Z)$ of $\Delta_k(V)$ intersecting  all tops from ${\mathfrak T}_A$ and ${\mathfrak T}_B$. 
So, $Z$ is contained in every non-degenerate $(k+1)$-dimensional subspace containing $A$. 
We have not less than two such subspaces which means that $Z\subset A$. Similarly, we obtain that $Z\subset B$. 
Since $Z$ is $(k-1)$-dimensional, $A$ and $B$ are adjacent. 

Suppose that $A$ and $B$ are adjacent.
By Lemma \ref{lemma-F}, every non-degenerate $(k+1)$-dimensional subspace containing $A$ 
contains a non-degenerate $k$-dimensional subspace containing $A\cap B$.
Therefore, the star ${\mathcal S}^c(A\cap B)$ intersects  every top from ${\mathfrak T}_A$.
Similarly, this star  intersects every top from ${\mathfrak T}_B$.
We obtain that ${\mathfrak T}_A$ and ${\mathfrak T}_B$ are adjacent in $\Gamma_t$. 

\section{Proof of Theorem \ref{theorem-1} for $n=4$}
Suppose that $n=4$ and $k=2$. Then $|{\mathbb F}|>n-k=2$.
The graphs $\Delta_s$ and $\Delta_t$ are complete and 
we determine maximal cliques of $\Delta'_s$ and $\Delta'_t$.

Two distinct stars of $\Delta_k(V)$ are adjacent vertices of $\Delta'_s$ if and only if the sum of the corresponding $1$-dimensional subspaces is degenerate.
This shows that  there precisely the following two types of maximal cliques of  $\Delta'_s$:
\begin{enumerate}
\item[$\bullet$] an isolated vertex which is a maximal star,
\item[$\bullet$] the set of all stars such that the corresponding $1$-dimensional subspaces are contained in a certain $C_i$.
\end{enumerate}
So, it will be difficult to  use the graph $\Delta'_s$ to recovering of $\Gamma_k(V)$.

Consider the graph $\Delta'_t$.
As in the previous section, for every degenerate $2$-dimensional subspace $X$ we denote by ${\mathfrak T}_X$
the set of all tops of $\Delta_k(V)$ such that the corresponding $3$-dimensional subspaces contain $X$. 
This is a clique of $\Delta'_t$. We show that such cliques are maximal and characterize them in terms of cardinalities and intersections.

Let ${\mathcal P}^*$ be the projective space dual to the projective space associated to $V$. 
Recall that points and lines of ${\mathcal P}^*$ correspond to $3$-dimensional and, respectively, $2$-dimensional subspaces of $V$. 
The points of ${\mathcal P}^*$ corresponding to the coordinate hyperplanes $C_1,C_2,C_3,C_4$ will be denoted by $c_1,c_2,c_3,c_4$ and called {\it degenerate}. 
Note that ${\mathcal P}^*$ is spanned by all degenerate points and any three mutually distinct degenerate points are non-collinear.

The remaining points of  ${\mathcal P}^*$  are said to be {\it non-degenerate}. Such points are identified with tops of $\Delta_k(V)$, i.e. vertices of $\Delta'_t$.
A line of ${\mathcal P}^*$ contains a degenerate point if and only if the corresponding $2$-dimensional subspace of $V$ is degenerate.
Therefore, two distinct non-degenerate points of ${\mathcal P}^*$  are adjacent vertices of $\Delta'_t$ if and only if the line joining them contains a degenerate point.

\begin{prop}\label{prop4-1}
For every degenerate $2$-dimensional subspace $X$ the clique ${\mathfrak T}_X$ is maximal. 
\end{prop}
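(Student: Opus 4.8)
The plan is to carry out the whole argument inside the dual projective space $\mathcal{P}^*$, where maximality of $\mathfrak{T}_X$ becomes a statement about points on a line. Under the identification set up above, $\mathfrak{T}_X$ consists precisely of the non-degenerate points lying on the line $\ell_X$ of $\mathcal{P}^*$ corresponding to the degenerate $2$-dimensional subspace $X$. Since $X$ is degenerate, $\ell_X$ passes through at least one degenerate point $c_i$, so any two non-degenerate points of $\ell_X$ are joined by a line, namely $\ell_X$ itself, which contains a degenerate point; this reconfirms that $\mathfrak{T}_X$ is a clique of $\Delta'_t$. To prove maximality I would argue by contradiction: suppose some non-degenerate point $Y^*$ of $\mathcal{P}^*$, necessarily not on $\ell_X$, is adjacent in $\Delta'_t$ to every point of $\mathfrak{T}_X$, and derive a contradiction.

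The heart of the argument is a projection from $Y^*$. I would first note that $\ell_X$ and $Y^*$ span a plane $\pi$ of $\mathcal{P}^*$ containing every line $Y^*Q$ that arises. For each non-degenerate point $Q$ on $\ell_X$, adjacency of $Y^*$ and $Q$ means the line $Y^*Q$ carries a degenerate point $d_Q$. As $Y^*$ and $Q$ are non-degenerate, $d_Q$ differs from both; and since the distinct lines $Y^*Q$ and $\ell_X$ meet only in $Q$, the point $d_Q$ cannot lie on $\ell_X$. Thus each $d_Q$ lies in the set $D$ of degenerate points of $\pi$ that are off $\ell_X$. The key observation is that the lines $Y^*Q$, as $Q$ ranges over the non-degenerate points of $\ell_X$, are pairwise distinct (because $Y^*\notin\ell_X$ forces distinct $Q$ to give distinct lines) and meet only at $Y^*$; hence they contain pairwise disjoint subsets of $D$, so $Q\mapsto d_Q$ is injective. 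This gives an injection from the non-degenerate points of $\ell_X$ into $D$.

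It then remains to count, which is where I expect the main care to be needed. Because the four degenerate points $c_1,\dots,c_4$ span $\mathcal{P}^*$, no plane can contain all four, so $\pi$ contains at most three of them; and since no three degenerate points are collinear, $\ell_X$ contains some number $m\in\{1,2\}$ of them. Consequently $\ell_X$ carries $|\mathbb{F}|+1-m$ non-degenerate points, while $|D|\le 3-m$. The injection forces $|\mathbb{F}|+1-m\le 3-m$, that is $|\mathbb{F}|\le 2$, contradicting the standing hypothesis $|\mathbb{F}|>n-k=2$; for infinite $\mathbb{F}$ the contradiction is immediate, an infinite set injecting into one of size at most two. The two subtle points to verify are that the degenerate witness $d_Q$ is genuinely forced off $\ell_X$ (so that it must be spent from the very scarce set $D$), and that the estimate is uniform in $m$: the convenient cancellation of $m$ handles at once the case where $\ell_X$ meets one coordinate point and the case where it meets two, so no separate analysis of $X=C_i\cap C_j$ is required.
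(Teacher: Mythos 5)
Your proof is correct and follows essentially the same route as the paper's: project from a hypothetical common neighbour $p=Y^*$ onto $\ell_X$, observe that distinct non-degenerate points of $\ell_X$ force distinct degenerate points off $\ell_X$ but inside the plane spanned by $\ell_X$ and $Y^*$, and contradict the fact that the four degenerate points span $\mathcal{P}^*$. The only (cosmetic) difference is that your injection argument with the uniform bound $|\mathbb{F}|+1-m\le 3-m$ replaces the paper's explicit two-case analysis according to whether $\ell_X$ carries one or two degenerate points.
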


\begin{proof}
Since  $|{\mathbb F}|\ge 3$, every line of ${\mathcal P}^*$ contains at least four points. 
Let $\ell$ be the line of ${\mathcal P}^*$ corresponding to $X$. 

Suppose that $\ell$ contains precisely one degenerate point. Then there are at least three non-degenerate points on $\ell$. 
If a point $p\not\in \ell$ is adjacent to these three points in $\Delta'_t$, then the lines joining these points and $p$ contain
the remaining three degenerate points. 
This means that all degenerate points are contained in the plane spanned by $\ell$ and $p$ 
which  contradicts the fact that ${\mathcal P}^*$ is spanned by all degenerate points.

If $\ell$ contains two degenerate points, then there are at least two non-degenerate points on $\ell$.
If a point $p\not\in \ell$ is adjacent to these two points in $\Delta'_t$, then the lines joining these points and $p$ contain
the remaining two degenerate points. As above, we get a contradiction. 
\end{proof}

Maximal cliques of $\Delta'_t$ defined by degenerate $2$-dimensional subspaces of $V$ (lines of ${\mathcal P}^*$
passing through degenerate points) are said to be {\it linear}. 
Show that there are {\it non-linear} maximal cliques of $\Delta'_t$, i.e. maximal cliques which are not contained in lines of ${\mathcal P}^*$.

Consider the plane spanned by $c_1,c_2,c_3$ and a line $\ell$ in this plane which  passes through $c_1$ and does not contain $c_2$ and $c_3$.
We take distinct points $p_2,p_3\in \ell$ which are different from $c_1$ and do not belong to the line joining $c_2$ and $c_3$
(such points exist, since $|{\mathbb F}|\ge 3$ and $\ell$ contains at least four points).
If $p_1$ is  the intersecting point of the  lines joining $c_2$ and $c_3$ with $p_2$ and $p_3$ (respectively),
then $p_1,p_2,p_3$ are non-collinear in ${\mathcal P}^*$ and mutually adjacent in $\Delta'_t$.
A maximal clique of $\Delta'_t$ containing these points is as required.

\begin{lemma}\label{lemma4-1}
For every non-linear maximal clique of $\Delta'_t$ the following assertions are fulfilled:
\begin{enumerate}
\item[{\rm (1)}] it is contained in one of the planes spanned by three degenerate points;
\item[{\rm(2)}] it contains at most four vertices;
\item[{\rm (3)}] it does not intersect the lines joining pairs of degenerate points.
\end{enumerate}
\end{lemma}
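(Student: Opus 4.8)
The plan is to argue entirely inside the dual projective space $\mathcal{P}^*$, using the adjacency criterion that two non-degenerate points are adjacent in $\Delta'_t$ precisely when the line joining them passes through one of the degenerate points $c_1,c_2,c_3,c_4$. Let $\mathcal{C}$ be a non-linear maximal clique. Being non-linear, $\mathcal{C}$ contains three mutually adjacent, non-collinear points $p_1,p_2,p_3$. For each pair $p_i,p_j$ the line $p_ip_j$ passes through a degenerate point; two sides meeting at a common vertex cannot pass through the same degenerate point, for then all three $p_i$ would be collinear. Hence the three sides pass through three distinct degenerate points, all lying in the plane $\pi=\langle p_1,p_2,p_3\rangle$. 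Since any three degenerate points are non-collinear, they span $\pi$, so $\pi$ is one of the four planes spanned by triples of degenerate points and the remaining degenerate point lies outside $\pi$.

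To finish (1) I would show every other point $q\in\mathcal{C}$ lies in $\pi$. If not, then for each $i$ the line $qp_i$ passes through a degenerate point; a degenerate point in $\pi$ would place two points of $qp_i$ in $\pi$ and hence $q\in\pi$, a contradiction, so each of the three lines $qp_1,qp_2,qp_3$ passes through the unique degenerate point off $\pi$. These three lines then all contain $q$ and that point, forcing $p_1,p_2,p_3$ onto a single line -- impossible. Thus $\mathcal{C}\subset\pi$, and I fix coordinates with the three degenerate points of $\pi$ equal to $c_1=[1:0:0]$, $c_2=[0:1:0]$, $c_3=[0:0:1]$.

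For (3), note that the only lines joining pairs of degenerate points that meet $\pi$ outside the degenerate points are the three sides $c_ic_j$ of the triangle. Assume some $q\in\mathcal{C}$ lies on $c_1c_2$. Every neighbour of $q$ inside $\pi$ lies on $c_1c_2$ or on the single line $qc_3$ (the lines $qc_1$ and $qc_2$ both coincide with $c_1c_2$), so $\mathcal{C}$ is covered by these two lines; since $\mathcal{C}$ is not contained in either line, it contains points $p\in c_1c_2$ and $p'\in qc_3$ with $p\notin qc_3$ and $p'\notin c_1c_2$. A short determinant computation shows the line $pp'$ avoids all of $c_1,c_2,c_3$, so $p$ and $p'$ are non-adjacent, contradicting $\{p,p'\}\subset\mathcal{C}$.

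The crux is (2), and it rests on (3). Label so that the side of the triangle $p_1p_2p_3$ opposite $p_i$ passes through $c_i$; then the two sides through $p_i$ pass through the other two degenerate points. Any further point $q\in\mathcal{C}$ is adjacent to each $p_i$, so $qp_i$ passes through some degenerate point of $\pi$; were this $c_j$ or $c_k$ with $\{i,j,k\}=\{1,2,3\}$, the line $qp_i$ would coincide with a side through $p_i$ and $q$ would lie on it, contradicting (3). Hence $qp_i$ passes through $c_i$, so $q\in c_1p_1\cap c_2p_2\cap c_3p_3$. As $c_1p_1\neq c_2p_2$, this intersection is a single point; hence $\mathcal{C}$ has at most one point beyond $p_1,p_2,p_3$ and $|\mathcal{C}|\le 4$. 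The main obstacle I anticipate is isolating the observation that every admissible incidence pattern for $q$ other than ``$qp_i$ through $c_i$ for all $i$'' forces $q$ onto a side of the triangle; once this is made explicit, (3) rules those patterns out and the uniqueness of $q$ -- and thus the bound $4$ -- follows at once.
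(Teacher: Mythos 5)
Your part (1) follows the paper's argument essentially verbatim, and your part (3) is a correct (slightly more computational) alternative to the paper's, which instead notes that each side of the triangle $p_1p_2p_3$ carries exactly one degenerate point and reads (3) off from that. The gap is in part (2). When the line $qp_i$ passes through $c_j$ with $j\neq i$, you correctly conclude that $q$ lies on the side $p_ip_k$ of the triangle $p_1p_2p_3$, but you then assert that this contradicts (3). It does not: $p_ip_k$ contains exactly one degenerate point (namely $c_j$), so it is not one of the lines joining pairs of degenerate points, and a generic point of $p_ip_k$ lies on none of the lines $c_ac_b$. Statement (3) therefore says nothing about such a $q$, and the ``inadmissible incidence patterns'' you want to exclude are not excluded by it. Since (2) is the crux of the lemma and rests entirely on this exclusion, the bound $|\mathcal{C}|\le 4$ is not established as written.

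What actually rules out $q\in p_ip_k$ is that $q,p_i,p_k$ would then be three collinear vertices of the clique, and no three vertices of a non-linear maximal clique of $\Delta'_t$ are collinear. This is precisely the auxiliary fact the paper invokes, via the argument of Proposition \ref{prop4-1}: a clique point off a line $\ell\subset\pi$ carrying one degenerate point and three clique points would need three distinct lines through it meeting three distinct degenerate points other than the one on $\ell$, while $\pi$ contains only two others. You never establish this non-collinearity. Alternatively, you can repair the step directly inside your own setup: if $q\in p_ip_k\setminus\{p_i,p_k\}$, then $qp_j$ cannot pass through $c_j$ (else $qp_j=qc_j=p_ip_k\ni p_j$), so it passes through $c_i$ or $c_k$, whence $qp_j$ equals $p_jp_k$ or $p_ip_j$ and $q\in p_ip_k\cap p_jp_k=\{p_k\}$ or $q\in p_ip_k\cap p_ip_j=\{p_i\}$, a contradiction. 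Either fix is short, but some such argument must replace the appeal to (3).
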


\begin{proof}
Let ${\mathcal X}$ be a maximal clique of $\Delta'_t$ containing three non-collinear points $p_1,p_2,p_3$.
Since the plane spanned by $p_1,p_2,p_3$ contains at most three degenerate points,
there is precisely one degenerate point on each line joining two distinct $p_i$.
This guarantees that the lines joining pairs of degenerate points do not contain $p_1,p_2,p_3$. 

So, $p_1,p_2,p_3$ belong to the plane spanned by three degenerate points.
Without loss of generality we assume that the degenerate points are $c_1,c_2,c_3$.
If $p\in {\mathcal X}$ is not on this plane, then each of the lines joining $p$ with $p_1,p_2,p_3$ passes through $c_4$ which is impossible.
So, ${\mathcal X}$ is contained in the plane spanned by $c_1,c_2,c_3$.  

Suppose that $p\in {\mathcal X}$ is distinct from $p_1,p_2,p_3$.
The arguments used to prove Proposition \ref{prop4-1} show that any three mutually distinct vertices of ${\mathcal X}$ are non-collinear points of ${\mathcal P}^*$.
Then the lines joining $p$ with $p_1,p_2,p_3$ are mutually distinct and each of these lines contains precisely one of $c_1,c_2,c_3$. 
If $q\in {\mathcal X}$ is distinct from $p_1,p_2,p_3,p$, then the line joining $p$ and $q$ contains one of $c_1,c_2,c_3$
and, consequently, this lines  contains one of $p_1,p_2,p_3$ which is impossible. 
Therefore, ${\mathcal X}=\{p_1,p_2,p_3,p\}$.

It was shown above that the lines joining pairs of degenerate points do not contain $p_1,p_2,p_3$.
Applying the same arguments to the non-collinear points $p_1,p_2,p$ we establish that 
these lines do not contain $p$. 
\end{proof}

\begin{prop}\label{prop4-2}
Suppose that $|{\mathbb F}|\ge 6$. Then a maximal clique of $\Delta'_t$ is linear if and only if it contains at least five vertices.
\end{prop}

\begin{proof}
A line of ${\mathcal P}^*$ contains at least $|{\mathbb F}|-1$ non-degenerate points.
Therefore, every linear maximal clique contains at least five vertices.
By Lemma \ref{lemma4-1}, non-linear maximal cliques contain at most four vertices. 
\end{proof}

For a line of ${\mathcal P}^*$ passing through a degenerate point one of the following possibilities is realized:
\begin{enumerate}
\item[(1)] it is not contained in the planes spanned by triples of degenerate points;
\item[(2)] it passes through two distinct degenerate points;
\item[(3)] it is contained in one of the planes spanned by three degenerate points and (2) is not satisfied.
\end{enumerate}
A linear maximal clique of $\Delta'_t$  is said to be {\it of type} $m\in \{1,2,3\}$
if the corresponding line of ${\mathcal P}^*$ satisfies $(m)$.

\begin{prop}\label{prop4-3}
If ${\mathbb F}={\mathbb F}_q$, then the following assertions are fulfilled: 
\begin{enumerate}
\item[{\rm(1)}] A maximal clique of $\Delta'_t$ is a linear maximal clique of type $1$ or $2$
if and only if it intersects any other maximal clique of $\Delta'_t$ in at most one vertex.
\item[{\rm (2)}] 
A linear maximal clique of type $1$ or $2$ is a linear maximal clique of type $2$  if and only if it consists of $q-1$ vertices.
\item[{\rm (3)}] 
A maximal clique of $\Delta'_t$ which is not a linear maximal clique of type $1$ or $2$
is a linear maximal clique of type $3$ if and only if 
it intersects a linear maximal clique of type $2$.
\end{enumerate}
\end{prop}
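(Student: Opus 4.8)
The plan is to carry out all of the reasoning inside the dual projective space $\mathcal{P}^*=\mathrm{PG}(3,q)$, using that the four degenerate points $c_1,c_2,c_3,c_4$ form a frame: any three of them are non-collinear and together they span $\mathcal{P}^*$. The first thing I would record is the number of vertices of each kind of linear clique. A line of $\mathcal{P}^*$ has $q+1$ points and, since any three $c_i$ are non-collinear, meets the frame in at most two points; a type-$2$ line is an edge $c_ic_j$ and carries exactly two degenerate points, while type-$1$ and type-$3$ lines pass through exactly one $c_i$. Hence a type-$1$ or type-$3$ clique has $q$ vertices and a type-$2$ clique has $q-1$ vertices. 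Assertion (2) is immediate from this: since $q\ge 3$ gives $q\ne q-1$, among the type-$1$ and type-$2$ cliques the type-$2$ ones are precisely those of cardinality $q-1$.

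For assertion (1) I would prove both implications. Forward: a type-$1$ or type-$2$ clique meets any other maximal clique in at most one vertex. Two distinct lines of $\mathcal{P}^*$ meet in at most one point, which handles any other linear clique; a type-$1$ line is not contained in a face plane, so it meets the face plane carrying a non-linear clique in a single point; and a type-$2$ line is an edge, which by Lemma \ref{lemma4-1}(3) is disjoint from every non-linear clique. Converse (contrapositive): I show that every non-linear clique and every type-$3$ clique meets some other maximal clique in at least two vertices. For a non-linear clique I take two of its vertices $p,p'$; they are adjacent, so the line $pp'$ passes through a degenerate point and is a linear maximal clique distinct from the non-linear one and sharing $p,p'$. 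For a type-$3$ line $\ell\subset\langle c_1,c_2,c_3\rangle$ through $c_1$ I invoke the construction of non-linear cliques given before Lemma \ref{lemma4-1}: choosing admissible distinct non-degenerate $p_2,p_3\in\ell$ and setting $p_1=(c_2p_2)\cap(c_3p_3)$ yields a non-collinear mutually adjacent triple, whose maximal clique is non-linear and contains $p_2,p_3\in\ell$, so it shares two vertices with $\ell$.

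Assertion (3) is then a refinement inside the complement of types $1$ and $2$. By the dichotomy between linear and non-linear maximal cliques established earlier, and by assertion (1), this complement consists exactly of the non-linear cliques and the type-$3$ cliques. A type-$3$ line $\ell\subset\langle c_1,c_2,c_3\rangle$ through $c_1$ meets the opposite edge $c_2c_3$ in a point that is neither $c_2$ nor $c_3$, hence is a non-degenerate common vertex of $\ell$ and the type-$2$ clique carried by $c_2c_3$; so every type-$3$ clique intersects a type-$2$ clique. Conversely a non-linear clique avoids all edges by Lemma \ref{lemma4-1}(3), hence shares no vertex with any type-$2$ clique. Thus within the complement, meeting a type-$2$ clique is equivalent to being of type $3$.

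The counting and the line-plane incidence bookkeeping are routine; the step that needs real care is the converse of (1) for type-$3$ cliques, where I must guarantee that two suitable points $p_2,p_3$ actually exist on $\ell$. This is exactly where the standing hypothesis $q=|\mathbb{F}|>n-k=2$ enters: deleting from the $q+1$ points of $\ell$ the degenerate point $c_1$ and the point $\ell\cap(c_2c_3)$ leaves $q-1\ge 2$ admissible points, so the triangle construction can be performed.
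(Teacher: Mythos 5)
Your proposal is correct and follows essentially the same route as the paper: both rely on Lemma \ref{lemma4-1} and the triangle construction of non-linear cliques preceding it, with (2) reduced to counting degenerate points on a line and (3) reduced to the fact that non-linear cliques avoid the edges $c_ic_j$ while type-$3$ lines meet the opposite edge. You merely make explicit a few steps the paper leaves implicit, such as the cardinalities $q$ versus $q-1$ and the incidence argument showing a type-$3$ line meets a type-$2$ clique in a non-degenerate point.
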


\begin{proof}
(1). The intersection of two distinct linear maximal cliques of $\Delta'_t$ contains at most one vertex. 

A linear maximal clique of type $1$ corresponds to a line of ${\mathcal P}^*$ which is not contained in the planes spanned by triples of degenerate points.
By the statement (1) of Lemma  \ref{lemma4-1}, such a clique intersects every non-linear maximal clique of $\Delta'_t$  in at most one vertex. 
So, the intersection of every linear maximal clique of type $1$ with any other maximal clique of $\Delta'_t$ contains  at most one vertex. 

The same holds for linear maximal cliques of type $2$, since they do not intersect non-linear maximal cliques  by the statement (3) of Lemma \ref{lemma4-1}.

For every linear maximal clique of type $3$ there is a non-linear  maximal clique of $\Delta'_t$ intersecting this clique precisely in two vertices 
(see the construction of a non-linear maximal clique  before Lemma \ref{lemma4-1}).
Similarly, for every non-linear maximal clique of $\Delta'_t$ there is a linear maximal clique of type $3$ intersecting this clique precisely in two vertices. 

(2). This is obvious. 

(3). A maximal clique of $\Delta'_t$ which is not a linear maximal clique of type $1$ or $2$
is a linear maximal clique of type $3$ or a non-linear maximal clique. 
The statement (3) of Lemma \ref{lemma4-1} gives the claim.
\end{proof}

Propositions \ref{prop4-2} and \ref{prop4-3} characterize linear maximal cliques of $\Delta'_t$ in terms of cardinalities and intersections. 
Consider the simple graph $\Gamma'_t$ whose vertices are non-degenerate $2$-dimensional subspaces and 
linear maximal cliques of $\Delta'_t$ and whose edges are defined as for the graph $\Gamma_t$ in Section 4.
The arguments from Subsection 5.3 show that the map sending every non-degenerate $2$-dimensional subspace to itself and every ${\mathfrak T}_X$ to $X$
is an isomorphism between $\Gamma'_t$ and $\Gamma_k(V)$.

\section{Final remarks}
In this section, we explain why our methods do not work for the general case.
The last two remarks describe relations to results obtained in \cite{KP2} and \cite{PZ}.

The following modification of Example \ref{exmp1}  shows that Lemma \ref{lemma-F}  fails if $|{\mathbb F}|\le n-k$.

\begin{exmp}\label{exmp2}{\rm
Suppose that ${\mathbb F}={\mathbb F}_q$ and $q=n-k$. As in  Example \ref{exmp1}, we take
$$X=C_1\cap\dots\cap C_{n-k+1}.$$ 
Let $P$ be the $1$-dimensional subspace containing the vector $(0,1,\dots,1)$
and let $Q$ be any $1$-dimensional subspace containing a vector 
$$(1,t_1,\dots,t_{n-k},\dots),$$
where $t_1,\dots, t_{n-k}$ are the elements of ${\mathbb F}_q$.
Then $P$ is contained in $C_1$. Every $1$-dimensional subspace of $P+Q$ distinct from $P$ contains the vector 
$$(0,t_i,\dots,t_i)-(1,t_1,\dots,t_{n-k},\dots)$$
for a certain $i\in \{1,\dots,n-k\}$ and, consequently, this subspace is contained in $C_j$ for a certain $j\in\{2,\dots,n-k+1\}$.
The $(k+1)$-dimensional subspace $$Y=X+P+Q$$ is non-degenerate (since $P+Q$ is non-degenerate).
Every $k$-dimensional subspace of $Y$ containing $X$ is $X+P'$, where $P'$ is a $1$-dimensional subspace of $P+Q$. 
It was established above that $P'$ is contained in $C_j$ for  a certain $j\in\{1,\dots,n-k+1\}$. Then $X+P'\subset C_j$.
So, all $k$-dimensional subspaces of $Y$ containing $X$ are degenerate.
}\end{exmp}

If $|{\mathbb F}|\le n-k$, then the implication $(1)\Rightarrow (2)$ from Lemma \ref{lemma-S} fails. 

\begin{exmp}\label{exmp3}{\rm
Suppose that ${\mathbb F}={\mathbb F}_q$ and $q=n-k\ge 3$. 
Let $X,P,Q,Y$ be as in Example \ref{exmp2}. 
Consider the $(k-1)$-dimensional subspaces 
$$X_1=X\;\mbox{ and }\;X_2=X'+P,$$ 
where $X'$ is a $(k-2)$-dimensional subspace of $X$. 
These subspaces are adjacent.
The $k$-dimensional subspace $$A_2=X_2+Q=X'+P+Q$$
is non-degenerate and belongs to the star ${\mathcal S}^c(X_2)$. 
Observe that $$X_1+A_2=X+A_2=X+P+Q=Y.$$
If $A_1\in {\mathcal S}^c(X_1)$ is adjacent to $A_2$, then $A_1+A_2$ is a $(k+1)$-dimensional subspace containing $X_1$ and $A_2$.
This means that  $A_1+A_2=Y$ and, consequently, $A_1\subset Y$. 
Then $A_1$ is a non-degenerate $k$-dimensional subspace of $Y$ containing $X_1=X$ which is impossible by Example \ref{exmp2}.
So,  there is no $A_1\in {\mathcal S}^c(X_1)$ adjacent to $A_2$. 
}\end{exmp}

\begin{rem}{\rm
If $|{\mathbb F}|\le n-k$, then
for stars ${\mathcal S}_1$ and ${\mathcal S}_2$  of  $\Delta_k(V)$ corresponding to $(k-1)$-dimensional subspaces $X_1$ and $X_2$ (respectively)
we are not able  to distinguish the following two possibilities:
\begin{enumerate}
\item[$\bullet$] $X_1,X_2$ are degenerate and $\dim(X_1\cap X_2)=k-2$;
\item[$\bullet$] $X_1,X_2$ are non-degenerate and $\dim(X_1\cap X_2)=k-3$.
\end{enumerate}
For each of these cases we have ${\mathcal S}_1\cap {\mathcal S}_2=\emptyset$ and 
there are adjacent $A_1\in{\mathcal S}_1$ and $A_2\in {\mathcal S}_2$.
In the first case, we take $A_i=X_i+P$, $i\in\{1,2\}$ for any non-degenerate $1$-dimensional subspace $P\not\subset X_1+X_2$.
In the second, $X_1+X_2$ is a non-degenerate $(k+1)$-dimensional subspace  and any $A_i\in{\mathcal S}_i$, $i\in\{1,2\}$ contained in this subspace are 
as required. 
}\end{rem}

\begin{rem}{\rm
By Theorem \ref{theorem-1}, every automorphism of $\Delta_k(V)$ is an automorphism of $\Gamma_k(V)$ if  $|{\mathbb F}|>n-k$. 
Automorphisms of Grassmann graphs are known (see, for example, \cite{Chow}).
Every automorphism of $\Gamma_k(V)$ is induced by a semilinear  automorphism of $V$ or a semilinear isomorphism of $V$ to $V^*$
(the second possibility is realized only when $n=2k$). 
Using these facts, we can show that every automorphism of $\Delta_k(V)$ is induced by a monomial semilinear automorphism of $V$
under the assumption that $|{\mathbb F}|>n-k$.
In \cite{KP2}, the same is obtained for the graph of non-degenerate linear $[n,k]_q$ codes without any assumptions.
Maximal cliques are used to reduce the general case to the case when $k=2$;
in the latter case,  some arguments in spirit of the Fundamental Theorem of Projective Geometry are applied. 
See \cite{P1,P2} for more strong results.
}\end{rem}

\begin{rem}{\rm
Some results on recovering point-line geometries from their subsets can be found in \cite{PZ}. 
Let ${\mathcal W}$ be a subset of a point-line geometry. 
The {\it index} of ${\mathcal W}$ is defined as a number $\lambda$ such that 
there is a line containing precisely $\lambda$ points belonging to ${\mathcal W}$ 
and every line contains at most $\lambda$ points belonging to ${\mathcal W}$ or it is contained in ${\mathcal W}$. 
One of the main conditions used to recover the point-line geometry from the complement of ${\mathcal W}$ is the following:
every line contains at least $2\lambda +2$ points, see \cite[Theorems 3.1 and 5.1]{PZ}.
If ${\mathbb F}={\mathbb F}_q$ and $q=n-k+1$, then  Example \ref{exmp1} shows that 
the index of the set of degenerate $k$-dimensional subspaces is $q$ and the above condition is not satisfied (every line contains precisely $q+1$ points).
So, results of \cite{PZ} are not applicable.
}\end{rem}

\end{document}